\DeclareFontFamily{OMX}{MnSymbolE}{}
\DeclareSymbolFont{MnLargeSymbols}{OMX}{MnSymbolE}{m}{n}
\DeclareFontShape{OMX}{MnSymbolE}{m}{n}{
    <-6>  MnSymbolE5
   <6-7>  MnSymbolE6
   <7-8>  MnSymbolE7
   <8-9>  MnSymbolE8
   <9-10> MnSymbolE9
  <10-12> MnSymbolE10
  <12->   MnSymbolE12
}{}
\DeclareFontShape{OMX}{MnSymbolE}{b}{n}{
    <-6>  MnSymbolE-Bold5
   <6-7>  MnSymbolE-Bold6
   <7-8>  MnSymbolE-Bold7
   <8-9>  MnSymbolE-Bold8
   <9-10> MnSymbolE-Bold9
  <10-12> MnSymbolE-Bold10
  <12->   MnSymbolE-Bold12
}{}
\let\llangle\@undefined
\let\rrangle\@undefined
\DeclareMathDelimiter{\llangle}{\mathopen}%
                     {MnLargeSymbols}{'164}{MnLargeSymbols}{'164}
\DeclareMathDelimiter{\rrangle}{\mathclose}%
                     {MnLargeSymbols}{'171}{MnLargeSymbols}{'171}
\newcommand{\sums}[3]{\sum_{#1 = #2}^{#3}}
\newcommand{\R}{\mathbb{R}}
\newcommand{\C}{\mathbb{C}}
\newcommand{\avgL}{\textit{\L}}
\newcommand{\Z}{\mathbb{Z}}
\newcommand{\Sphere}{\mathbb{S}}
\newcommand{\intD}{\,\mathrm{d}}
\newcommand{\setcolon}{\,\colon\,}
\newcommand{\supp}{\operatorname{supp}}
\newcommand\numberthis{\addtocounter{equation}{1}\tag{\theequation}}
\theoremstyle{plain}
\newtheorem{thm}{Theorem}[section]
\newtheorem{lem}[thm]{Lemma}
\newtheorem{prop}[thm]{Proposition}
\newtheorem{cor}[thm]{Corollary}
\theoremstyle{definition}
\theoremstyle{remark}
\newtheorem{rmrk}[thm]{Remark}
\address{Aalto University, Department of Mathematics and Systems Analysis, P.O. Box 11100, FI-00076 Aalto, Finland}
\email{aapo.laukkarien@aalto.fi}
\subjclass[2020]{42B20, 46E40}
\keywords{Sparse domination, convex body domination, Rough singular integral, matrix weight, commutator}
\begin{document}

\title[Rough singular integrals]{Convex body domination for Rough singular integrals}
\begin{abstract}
    Convex body domination is a technique, where operators acting on vector-valued functions are estimated via certain convex body averages of the input functions. This domination lets one deduce various matrix weighted bounds for these operators and their commutators.
    In this paper, we extend the sparse domination results for rough singular integrals due to Conde-Alonso, Culiuc, Di Plinio and Ou to the convex body setting. In particular, our methods apply to homogeneous rough singular integrals with unbounded angular part. We also note that convex body domination implies new two weight commutator bounds even in the scalar case.
\end{abstract}

\author{Aapo Laukkarinen}
\maketitle
\setcounter{tocdepth}{1}
\begingroup\hypersetup{linkcolor=black}
\tableofcontents
\endgroup

\section{Introduction}
For many operators acting on scalar functions, the technique of dominating the operator via averages taken over sparse collections of cubes yields sharp weighted estimates for the operator. In the vector case the weights can be matrix-valued, and thus in order to capture information about the direction of the input functions one must replace the averages with convex sets. This domination via convex sets is called convex body domination and it was initiated by Nazarov, Petermichl, Treil and Volberg in \cite{nazarov_convex_2017}. An exciting development appeared very recently in \cite{Domelevo_Matrix_A2_Conj}, where it was shown that the matrix weighted $A_2$ bound in \cite{nazarov_convex_2017} for general Calder\'on-Zygmund operators, which was achieved using convex body domination, is sharp.
Additional convex body domination results can be found, for instance, in 
\cite{AIF_2020__70_5_1871_0}, 
\cite{duong_variation_2021}, \cite{HYTONEN2024127565},  \cite{isralowitz_sharp_2021}, \cite{isralowitz_commutators_2022}, \cite{kakaroumpas2024matrixweighted}, \cite{laukkarinen2023convex}, \cite{muller_quantitative_2022}.

In this paper, we expand the convex body domination theory by showing convex body estimates for some rough singular integrals.  The main examples are rough homogeneous singular integrals
\[
    T_\Omega f(x)=\lim_{\varepsilon\to0}\int_{|y|>\varepsilon} f(x-y)\Omega(\frac{y}{|y|})\frac{\intD y}{|y|^d}, \quad \Omega\in L^q(\Sphere^{d-1}),\quad\int_{\Sphere^{d-1}}\Omega = 0,
\]
and the Bochner-Riesz mean at the critical index that is defined on the frequency side by
\[
    \widehat{B_\delta f}(\xi)=(1-|\xi|^2)_+^\delta\widehat f(\xi), \qquad \delta=\frac{d-1}{2}.
\]
In particular, we show that $T_\Omega$ and $B_\delta$ both satisfy $(L^1,L^p)$ and $(L^p,L^1)$ convex body domination bounds. See Theorem \ref{homCBD}, Corollary \ref{adjhomCBD} and Theorem \ref{CBDBRmeans} for the exact statements. The case $\Omega\in L^\infty(\Sphere^{d-1})$ is a known result due to \cite{muller_quantitative_2022}, but otherwise these bounds are new.

Our results are vector-valued extensions of the results achieved by Conde-Alonso, Culiuc, Di Plinio and Ou in \cite{10.2140/apde.2017.10.1255}, and our method of proof is an adaptation of their techniques. We will study reasonably general sesquilinear forms $\Lambda_\mu^\nu$ that admit a multi-scale structure
\[
    \Lambda_\mu^\nu(f,g)=\sum_{\mu<s\leq\nu} \int K_s(x,y) f(y)\bar g(x)\intD(y,x),
\]
and prove a convex body domination principle for $\sup_{\mu<\nu}|\Lambda_\mu^\nu(f,g)|$. See Theorem \ref{main} for more details. The assumptions on the sesquilinear form are scalar valued so the fact that $T_\Omega$ and $B_\delta$ fall under the umbrella of this general principle follows directly from the considerations in \cite{10.2140/apde.2017.10.1255}. 

In the case of $\Omega\in L^\infty(\Sphere^{d-1})$, similar bounds have been discovered also in \cite{AIF_2020__70_5_1871_0}, where they even estimate the larger form
\[  
    \int \sup_{\mu<\nu}|\sum_{\mu<s\leq\nu}\int K_s(x,y)f(y)\intD y|\,|g(x)|\intD x
\]
and get a slightly weaker $(L^p,L^p)$ bound. Convex body domination bounds for multi-scale operators have also been done before by the author in \cite{laukkarinen2023convex}. The  boundedness assumptions in \cite{laukkarinen2023convex} are weaker than we have here, but on the other hand, in the setting  of this paper we do not assume any regularity, which is present in \cite{laukkarinen2023convex}.

Due to results in \cite{kakaroumpas2024matrixweighted} and \cite{laukkarinen2023convex}, the convex body bounds for $T_\Omega$ and $B_\delta$ imply various matrix-weighted bounds for these operators and their commutators. We will also deduce new scalar-valued results from the convex body domination. In \cite{laukkarinen2023convex} it was shown that convex body domination implies a certain sparse domination result for the commutators. A result in \cite{Lerner2024bloom} says that this sparse domination of commutators can used to deduce two weight bounds in the so-called Bloom-type setting, and these Bloom-type bounds are completely new for $T_\Omega$ in the case where $\Omega$ is unbounded. In a forthcoming work with J. Sinko \cite{laukkarinencompactness}, we obtain further applications of these results to the compactness of commutators. 

The rest of this paper is organized as follows. In Section 2, we follow the set up from \cite{10.2140/apde.2017.10.1255} by introducing the sesquilinear forms in detail and define collections of cubes called stopping collections that play an essential role in the proof of the general domination principle. In Section 3, following \cite{HYTONEN2024127565} and \cite{kakaroumpas2024matrixweighted}, we gather the definitions that are needed to perform convex body domination in a general complex Banach space. Section 4 is dedicated to the proof of the general convex body domination principle, which is an adaptation of the general sparse domination argument from \cite{10.2140/apde.2017.10.1255}.  In Section 5, we outline the argument from \cite{10.2140/apde.2017.10.1255}, which shows that the general principle can be applied to $T_\Omega$. In Section 6 we gather the matrix-weighted estimates that follow from the convex body domination.  Another application is explored in Section 7, where we show scalar-valued two weight bounds for $[b,T_\Omega]$ in the Bloom-type setting. In Section 8, we show that the same results hold for $B_\delta$, but we present a different proof that uses weak-type control of the so-called grand maximal truncation operator. 

\section*{Acknowledgements}
The author would like to thank Tuomas Hytönen for valuable discussions and comments that improved the presentation. The author is a member of the Finnish Centre of Excellence in Randomness and Structures supported by the Research Council of Finland through grants 346314 and 364208.
\section*{Notation}
\begin{tabular}{p{1.5cm}p{10cm}}
$d,n$& Dimensional constants, these are always positive integers.\\[1ex]
$p'$& Hölder conjugate of a number $p\in[1,\infty]$, $p'=\frac{p}{p-1}$.\\[1ex]
$A\lesssim_{P}B$& Inequality with implicit constant that depends only on parameters $P$, i.e., $A\leq C_PB$.\\ [1ex]
$\mathbf{sh}\mathcal{Q}$& Union over a collection $\mathcal{Q}$ of sets, $\bigcup_{Q\in\mathcal{Q}}Q$.\\[1ex]
$|A|$& The $d$-dimensional Lebesque measure of a set $A$.\\[1ex]
$|x|$& The Euclidean norm of a vector $x$.\\[1ex]
$\bar x$& Complex conjugate of an element $x$ in a complex vector space.\\[1ex]
$\bar{B}_X$&Closed unit ball of a normed space $X$.\\[1ex]
$T^*$& The adjoint of an operator $T$. In particular, if $T$ is a complex matrix, then $T^*$ is the conjugate transpose of $T$.\\[1ex]
$\langle f, g\rangle$ & The usual integral pairing $\int  f(x)\bar  g(x)\intD x$, where $f, g$ are $\C$-valued. \\[1ex]
$\|f\|_{\avgL^p(Q)}$ & The $L^p$ average over a set $Q$, i.e., $(\fint_Q|f|^p)^\frac{1}{p}$. \\ [1ex]
$\avgL^p(Q)$& The normed space of functions $f$ satisfying $\|f\|_{\avgL^p(Q)}<\infty$.\\[1ex]
$M_p$& The maximal operator $M_pf(x)=\sup_{Q\ni x}\|f\|_{\avgL^p(Q)}$, here $Q$ is a cube and $p\in[1,\infty)$.\\[1ex]$M_p^\mathscr D$& The dyadic maximal operator $M_p^\mathscr Df(x)=\sup_{Q\ni x}\|f\|_{\avgL^p(Q)}$, here $Q$ is a dyadic cube and $p\in[1,\infty)$.\\[1ex]
$\llangle \vec f\rrangle_{\avgL^p(Q)}$& The $L^p$ convex body of $\vec f=(f_1,\dots f_n)$ related to a set $Q$, i.e., the set $\{(|Q|^{-1}\langle \mathbbm 1_Qf_i,\varphi\rangle)_{i=1}^n\setcolon \varphi\in\bar{B}_{\avgL^{p'}(Q)}\}$.
\end{tabular}

\section{Sesquilinear forms and stopping collections}
We follow the setting from \cite{10.2140/apde.2017.10.1255}. Let $1<r<\infty$ and $\Lambda$ be an $L^r(\R^d)\times L^{r'}(\R^d)$-bounded sesquilinear form whose kernel $K=K(x,y)$ coincides with a function away from the diagonal. More
precisely, whenever $f\in L^r(\R^d)$ and $g\in L^{r'}(\R^d)$ are compactly and disjointly supported
\[
    \Lambda(f,g)=\int_{\R^d}\int_{\R^d}K(x,y)f(y)\intD y\, \bar g(x)\intD x
\]
with absolute convergence of the integral.
\begin{rmrk}
    In contrast to \cite{10.2140/apde.2017.10.1255} we define $\Lambda$ as a sesquilinear (instead of bilinear) form in order to have a basis-independent extension for vector-valued functions (see Section \ref{compConvBodySec}). This rarely plays a role in calculations.
\end{rmrk}
We assume that
the kernel $K$ of $\Lambda$ admits the decomposition
\begin{align*}
    K(x,y)=\sum_{s\in\Z}K_s(x,y),
\end{align*}
where each piece is supported in an annulus away from the diagonal, i.e.,
\begin{equation}
    \supp K_s\subset \{(x,y)\in\R^d\times\R^d\setcolon x-y\in A_s\},\tag{S}\label{supportCond}
\end{equation}
where
\[
     A_s\coloneqq \{z\in\R^d\setcolon 2^{s-2}<|z|<2^s\}.
\]
Furthermore, we assume that there exists $1 < q \leq \infty$ such that
\[
    [K]_q\coloneqq\sup_{s\in\Z}2^\frac{sd}{q'}\sup_{x\in\R^d}(\|K_s(x,\cdot)\|_q+\|K_s(\cdot,x)\|_q)<\infty,\tag{K}\label{KernelCond}
\]
and the associated truncated forms
\[
    \Lambda_\mu^\nu(h_1,h_2)\coloneqq\int_{\R^d\times\R^d}\sum_{\mu<s\leq\nu} K_s(x,y)h_1(y)\bar h_2(x)\intD y\intD x
\]
satisfy
\begin{equation}\label{bddCond}
    C_T(r)\coloneqq \sup_{\mu<\nu}\|\Lambda_\mu^\nu\|_{L^r(\R^d)\times L^{r'}(\R^d)\to\C}<\infty.\tag{T}
\end{equation}

Let $\mathscr{D}$ be a dyadic lattice in $\R^d$. We fix a dyadic cube $Q\in\mathscr{D}$ and denote $s_Q\coloneqq \log_2 \ell(Q)$. A collection $\mathcal{Q}\subset\mathscr{D}$ of dyadic cubes is called a stopping collection with top $Q$ if the elements of $\mathcal Q$ are pairwise disjoint, contained in $3Q$ and enjoy the separation properties
\begin{equation}
    L,L'\in\mathcal Q,\, |s_L-s_{L'}|\geq 8 \quad\Rightarrow \quad7L\cap7L'=\emptyset
\end{equation}
and 
\begin{equation}
    \bigcup_{\substack{L\in\mathcal Q\\3L\cap2Q\neq\emptyset}}9L\subset\bigcup_{L\in\mathcal Q}L\eqqcolon \mathbf{sh}\mathcal Q.
\end{equation}

We also define the local sesquilinear forms by
\begin{equation*}
    \Lambda_{Q,\mu,\nu}(h_1,h_2) \coloneqq \Lambda_{\mu}^{\min\{s_Q,\nu\}}(h_1\mathbbm 1_Q,h_2)=\Lambda_{\mu}^{\min\{s_Q,\nu\}}(h_1\mathbbm 1_Q,h_2\mathbbm 1_{3Q})
\end{equation*}
for all dyadic cubes $Q$. The last equality is true due to the support condition \eqref{supportCond}. Furthermore, given a stopping collection $\mathcal Q$ with top $Q$, we define 
\[
    \Lambda_{\mathcal Q,\mu,\nu}(h_1,h_2)\coloneqq \Lambda_{Q,\mu,\nu}(h_1,h_2)-\sum_{\substack{L\in\mathcal Q\\L\subset Q}} \Lambda_{L,\mu,\nu}(h_1,h_2)= \Lambda_{\mathcal Q,\mu,\nu}(h_1\mathbbm 1_Q,h_2\mathbbm 1_{3Q}).
\]
Again, the last equality is due to \eqref{supportCond}. Note that $\Lambda_{\mathcal Q,\mu,\nu}=\Lambda_{\mathcal Q',\mu,\nu}$, where $\mathcal{Q}'\coloneqq \{L\in\mathcal{Q}\setcolon L\subset Q\}$.

For $1 \leq p \leq \infty$, define $\mathcal{Y}_p(\mathcal Q)$ to be the  subspace of $L^p(\R^d)$ of functions $h$ that are supported in $3Q$ and the quantity
\[
    \|h\|_{\mathcal{Y}_p(\mathcal Q)}\coloneqq\begin{cases}
        \max \left\{\|h\mathbbm 1_{\R^d\setminus \mathbf{sh}\mathcal Q}\|_\infty,\,\displaystyle{\sup_{L\in\mathcal Q}\inf_{x\in\widehat{L}}}M_ph(x)\right\},\quad &p<\infty\\
        \|h\|_\infty, &p=\infty,
    \end{cases}
\]
 where $\widehat L$ is the $2^5$-fold dilate of $L$, is finite. We denote $\mathcal{X}_p(\mathcal Q)$ the subspace of $\mathcal{Y}_p(\mathcal Q)$ of functions satisfying 
\[
    b=\sum_{L\in\mathcal Q} b_L,\quad \supp b_L\subset L. 
\]
We note that the above condition is equivalent to $\supp b\subset \mathbf{sh}\mathcal Q$.
We also write $b\in\dot{\mathcal{X}}_p(\mathcal Q)$ if $b\in \mathcal{X}_p(\mathcal Q)$ and 
\[
    \int_L b_L = 0 \quad \text{ for all } L\in\mathcal Q.
\]
We make a standing assumption that for some $p_1,p_2\in[1,\infty)$ there exists a constant $C_L$ such that 
\begin{align}
    |\Lambda_{\mathcal Q, \mu,\nu}(b,h)|\leq C_L \,|Q|\,\|b\|_{\dot{\mathcal{X}}_{p_1}(\mathcal Q)}\|h\|_{\mathcal{Y}_{p_2}(\mathcal Q)},\label{localBddCond1}\tag{L1}\\|\Lambda_{\mathcal Q, \mu,\nu}(h,b)|\leq C_L \,|Q|\,\|h\|_{\mathcal{Y}_{\infty}(\mathcal Q)}\|b\|_{\dot{\mathcal{X}}_{p_2}(\mathcal Q)}\label{localBddCond2}\tag{L2}
  \end{align}
hold uniformly over all $\mu, \nu \in \Z$, all dyadic lattices $\mathscr{D}$, all $Q \in \mathscr D$ and all stopping collections
$\mathcal Q \subset \mathscr D$ with top $Q$. Note that conditions \eqref{localBddCond1} and \eqref{localBddCond2} are not symmetric.

In order to verify \eqref{localBddCond1} and \eqref{localBddCond2} in concrete situations, the following representations of the sesquilinear forms $\Lambda_{\mathcal{Q},\mu,\nu}$ will turn out to be helpful. These representations are sesquilinear form versions of \cite[Equations (2-9)--(2-11)]{10.2140/apde.2017.10.1255}, and we will formulate them in the following lemma.
\begin{lem}\label{representations}
    Assume that $K$ satisfies \eqref{supportCond} and \eqref{KernelCond} with $1<q\leq\infty$. If $b\in\mathcal{X}_1(\mathcal{Q})$ with $\supp b\subset Q$, then
\[
    \Lambda_{\mathcal{Q},\mu,\nu}(b,h)=\sum_{j\geq1}\int \sum_{\mu<s\leq\min\{s_Q,\nu\}}K_s(x,y)b_{s-j}(y)\bar h(x)\intD y\intD x,\numberthis\label{rep1}
\]
where \[b_s\coloneqq\sum_{\substack{L\in\mathcal{Q}\\s_L=s}}b_L.\] 
If $h\in \mathcal{Y}_{q'}(\mathcal{Q})$ with $\supp h\subset Q$, and $b\in \mathcal{X}_{q'}(\mathcal{Q})$, then 
\[
    \Lambda_{\mathcal{Q},\mu,\nu}(h,b)=\overline{\Lambda^*_{\mathcal{Q},\mu,\nu}(b^\text{in},h)} + V_\mathcal{Q}(h, b), \numberthis\label{rep2}
\]
where $\Lambda^*$ is like $\Lambda$, but with $\overline{K(y,x)}$ in place of $K(x,y)$, the function $b^\text{in}$ is a certain truncation of $b$ that satisfies $\|b^\text{in}\|_{\mathcal{X}^{p}(\mathcal{Q})}\leq \|b\|_{\mathcal{X}^{p}(\mathcal{Q})}$ for any $1\leq p\leq\infty$
and the remainder term $V_\mathcal{Q}$ satisfies
\[
    |V_\mathcal{Q}(h, b)|\leq2^{\vartheta d}[K]_q|Q|\|h\|_{\mathcal{Y}_{q'}(\mathcal{Q})}\|b\|_{\mathcal{X}_{q'}(\mathcal{Q})}\numberthis\label{rep2rem}
\]
for a suitable absolute constant $\vartheta$.    
\end{lem}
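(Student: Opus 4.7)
The identity \eqref{rep1} will follow from straightforward book-keeping. The plan is to expand $\Lambda_{\mathcal{Q},\mu,\nu}(b,h) = \Lambda_{Q,\mu,\nu}(b,h) - \sum_{L \in \mathcal{Q},\, L \subset Q} \Lambda_{L,\mu,\nu}(b,h)$, use the decomposition $b = \sum_L b_L$ with $\supp b_L \subset L$, and observe that the top form contributes scales $\mu < s \leq \min\{s_Q,\nu\}$ while each local form subtracts scales $\mu < s \leq \min\{s_L,\nu\}$. The difference is supported on pairs $(L,s)$ with $s_L < s \leq \min\{s_Q,\nu\}$, and reindexing by $j = s - s_L \geq 1$, together with the grouping $\sum_{L :\, s_L = s-j} b_L = b_{s-j}$, yields \eqref{rep1}.

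For \eqref{rep2}, the starting observation is that formally $\Lambda(h, b) = \overline{\Lambda^*(b, h)}$ by swapping the dummy variables $x \leftrightarrow y$. The plan is to apply this swap piece-by-piece to $\Lambda_{\mathcal{Q},\mu,\nu}(h,b) = \Lambda_{Q,\mu,\nu}(h,b) - \sum_L \Lambda_{L,\mu,\nu}(h,b)$. The obstruction is that the truncations do not swap symmetrically: in $\Lambda_L(h,b)$ the indicator $\mathbf{1}_L$ restricts the first argument $h$ (the variable $y$), while in $\Lambda^*_L(b,h)$ it restricts $b$, which after the swap sits on the opposite variable. Tracking this discrepancy produces explicit error integrals involving the factors $\mathbf{1}_Q(y) - \mathbf{1}_Q(x)$ and $\mathbf{1}_L(y) - \mathbf{1}_L(x)$ which, by the annular support \eqref{supportCond} of each $K_s$, are concentrated on strips of width $\lesssim 2^s$ around $\partial Q$ and $\partial L$. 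I will define $b^{\text{in}}$ as an appropriate pointwise truncation of $b$; the pointwise condition $|b^{\text{in}}(y)| \leq |b(y)|$ automatically gives $\|b^{\text{in}}\|_{\mathcal{X}_p(\mathcal Q)} \leq \|b\|_{\mathcal{X}_p(\mathcal Q)}$ for every $p$ via the maximal function built into $\|\cdot\|_{\mathcal{X}_p}$. The truncation is chosen so that $\overline{\Lambda^*_{\mathcal{Q},\mu,\nu}(b^{\text{in}}, h)}$ absorbs the ``interior'' part of the swapped expression, leaving $V_\mathcal{Q}(h,b)$ as a sum of genuine boundary integrals.

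The bound \eqref{rep2rem} will then come from a direct application of \eqref{KernelCond} and Hölder's inequality, scale by scale. Fixing $y$ in the scale-$s$ strip and using $\|K_s(\cdot,y)\|_q \leq [K]_q 2^{-sd/q'}$, Hölder gives $\int |K_s(x,y) b(x)| \,dx \leq [K]_q 2^{-sd/q'} \|b\|_{L^{q'}(y + A_s)}$; covering $y + A_s$ by boundedly many cubes of side $\sim 2^s$ and invoking the maximal-function description of $\|b\|_{\mathcal{X}_{q'}(\mathcal Q)}$, the $L^{q'}$-norm on the right is controlled by $2^{sd/q'} \|b\|_{\mathcal{X}_{q'}(\mathcal Q)}$. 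Integrating $|h(y)|$ over the strip, whose $d$-dimensional measure is $\lesssim 2^s \ell(Q)^{d-1}$ for the top piece (respectively $\lesssim 2^s \ell(L)^{d-1}$ for each local piece), and controlling averages of $h$ via $\|h\|_{\mathcal{Y}_{q'}(\mathcal Q)}$, reduces the remainder to a geometric series in $s$ that sums to a dimensional multiple of $[K]_q |Q| \|h\|_{\mathcal{Y}_{q'}(\mathcal Q)} \|b\|_{\mathcal{X}_{q'}(\mathcal Q)}$. The separation properties of the stopping collection ensure bounded overlap of the various strips, so the implicit constant takes the form $2^{\vartheta d}$.

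The main technical obstacle will be the careful choice of $b^{\text{in}}$: it has to be a pointwise truncation (so that all $\mathcal{X}_p$-norms are preserved at once), but simultaneously organized so that the difference $\Lambda_{\mathcal{Q},\mu,\nu}(h,b) - \overline{\Lambda^*_{\mathcal{Q},\mu,\nu}(b^{\text{in}}, h)}$ collapses to integrals supported on the $2^s$-strips where the kernel-and-Hölder scheme above closes. Tracking these choices uniformly in $s$ and across the local truncations is what will consume the bulk of the argument.
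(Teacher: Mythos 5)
Your proof of \eqref{rep1} matches the paper's own argument: expand $\Lambda_{\mathcal Q,\mu,\nu}$ as the top form minus the local forms, bring the sum over $L$ outside using disjoint supports, observe that for each $L$ the difference is supported on scales $s_L<s\le\min\{s_Q,\nu\}$, and reindex by $j=s-s_L$ to collect $\sum_{s_L=s-j}b_L=b_{s-j}$. That part is fine.

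For \eqref{rep2}--\eqref{rep2rem}, you are proposing to re-derive from scratch an argument that the paper does not re-derive at all. The paper's proof of this half is a short translation: it notes that the sesquilinear form $\Lambda$ corresponds to the bilinear form $\Lambda^{\mathrm{bi}}(f,g)\coloneqq\Lambda(f,\bar g)$, cites the bilinear representation and remainder bound already proved in \cite[Remark~2.6 and Appendix~A]{10.2140/apde.2017.10.1255}, uses the observation from \cite[p.~1263]{10.2140/apde.2017.10.1255} that $(\bar b)^{\mathrm{in}}=\overline{b^{\mathrm{in}}}$, checks via \eqref{rep1} that $\tilde\Lambda^{\mathrm{bi}}_{\mathcal Q,\mu,\nu}(\overline{b^{\mathrm{in}}},h)=\overline{\Lambda^*_{\mathcal Q,\mu,\nu}(b^{\mathrm{in}},h)}$, and sets $V_{\mathcal Q}(h,b)\coloneqq V^{\mathrm{bi}}_{\mathcal Q}(h,\bar b)$, so \eqref{rep2rem} is inherited verbatim from the bilinear estimate after substituting $\bar b$ for $b$. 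Your sketch of the underlying boundary-strip argument --- swap the roles of $x$ and $y$, track the asymmetry of the indicators $\mathbbm 1_Q,\mathbbm 1_L$, localise to annular strips via \eqref{supportCond}, and close with \eqref{KernelCond} plus H\"older and the maximal-function description of the $\mathcal X_{q'},\mathcal Y_{q'}$ norms --- is broadly the content of \cite[Appendix~A]{10.2140/apde.2017.10.1255}, so the route is reasonable. However, as you acknowledge, the precise construction of $b^{\mathrm{in}}$ and the uniform-in-$s$ bookkeeping across the local truncations constitute ``the bulk of the argument'' and are not carried out, and the pointwise-truncation property $|b^{\mathrm{in}}|\le|b|$ that you rely on to transfer $\mathcal X_p$ norms is precisely what must be verified once $b^{\mathrm{in}}$ is pinned down. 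So for the second half you have a plausible plan to reproduce the cited appendix, not a proof; if you want a self-contained argument you must supply the definition of $b^{\mathrm{in}}$ and close those estimates, whereas the paper's intended route is the short citation-and-conjugation step.
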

\begin{proof}
We will first deal with \eqref{rep1}. Note that for $b\in\mathcal{X}_1(\mathcal{Q})$ with $\supp b \subset Q$ we have
\begin{align*}
    \Lambda_{\mathcal{Q},\nu,\mu}(b,h)&=\Lambda_{Q,\nu,\mu}(b,h)-\sum_{\substack{L\in\mathcal{Q}\\L\subset Q}}\Lambda_{L,\nu,\mu}(b,h)\\&=\Lambda_{Q,\nu,\mu}\Big(\sum_{L\in\mathcal{Q}}b_L,h\Big)-\sum_{L\in\mathcal{Q}}\Lambda_{L,\nu,\mu}(b_L,h).
\end{align*}
Since $\mathcal{Q}$ is a disjoint collection, we may bring the first sum outside the integral, which yields
\begin{align*}
    \Lambda_{\mathcal{Q},\nu,\mu}(b,h)&=\sum_{L\in\mathcal{Q}}\Bigg(\int \sum_{\mu<s\leq\min\{s_Q,\nu\}}K_s(x,y)b_{L}(y)\bar h(x)\intD y\intD x\\&\qquad\qquad\qquad\qquad- \int\sum_{\mu<s\leq\min\{s_L,\nu\}}K_s(x,y)b_{L}(y)\bar h(x)\intD y\intD x\Bigg)\\
    &=\sum_{L\in\mathcal{Q}}\int \sum_{\substack{\mu<s\leq\min\{s_Q,\nu\}\\s>s_L}}K_s(x,y)b_{L}(y)\bar h(x)\intD y\intD x\\
    &=\int \sum_{\mu<s\leq\min\{s_Q,\nu\}}K_s(x,y)\sum_{\substack{L\in\mathcal{Q}\\s_L<s}}b_{L}(y)\bar h(x)\intD y\intD x.
\end{align*}
Now the wanted representation \eqref{rep1} follows since \[\sum_{\substack{L\in\mathcal{Q}\\s_L<s}}b_{L}=\sum_{j\geq1}\sum_{\substack{L\in\mathcal{Q}\\s_L=s-j
    }}b_{L}=\sum_{j\geq1}b_{s-j},\]
and the functions $b_s$ are disjointly supported so we can bring the sum outside the integral. 

The second representation \eqref{rep2}--\eqref{rep2rem} was stated in \cite[Remark 2.6]{10.2140/apde.2017.10.1255} and proved in \cite[Appendix A]{10.2140/apde.2017.10.1255} for the bilinear forms $\Lambda^{\text{bi}}(b,h)\coloneqq\Lambda(b,\bar h)$. More precisely, they showed that
\[
    \Lambda^{\text{bi}}_{\mathcal{Q},\mu,\nu}(h,b)=\tilde\Lambda^{\text{bi}}_{\mathcal{Q},\mu,\nu}(b^{\text{in}},h)+V^{\text{bi}}_\mathcal{Q}(h,b),
\]
where $\tilde\Lambda^{\text{bi}}$ is like $\Lambda^{\text{bi}}$, but with $K(y,x)$ in place of $K(x,y)$, the function $b^{\text{in}}$ is as in the claim of this Lemma, and the remainder term satisfies 
\[
    |V_\mathcal{Q}^{\text{bi}}(h,b)| \leq2^{\vartheta d}[K]_q|Q|\|h\|_{\mathcal{Y}_{q'}(\mathcal{Q})}\|b\|_{\mathcal{X}_{q'}(\mathcal{Q})}\numberthis\label{bilinRepRem}.
\] 
Applying this result and setting $V_\mathcal{Q}(h,b)\coloneqq V^\text{bi}_\mathcal{Q}(h,\bar b)$ we get
\[
    \Lambda_{\mathcal{Q},\mu,\nu}(h,b)=\Lambda^{\text{bi}}_{\mathcal{Q},\mu,\nu}(h,\bar b)= \tilde \Lambda^{\text{bi}}_{\mathcal{Q},\mu,\nu}((\bar{b})^{\text{in}},h) +V_\mathcal{Q}(h,b).
\]
We observe from \cite[page 1263]{10.2140/apde.2017.10.1255}  that the function $b^\text{in}$ satisfies $(\bar{b})^{\text{in}}=\overline{b^{\text{in}}}$. 
Then using \eqref{rep1} it is simple to check that 
\[
    \tilde \Lambda^{\text{bi}}_{\mathcal{Q},\mu,\nu}(\overline{b^{\text{in}}},h)= \overline{\Lambda^*_{\mathcal{Q},\mu,\nu}(b^{\text{in}},h)}.
\]
This proves \eqref{rep2}, and
 substituting $\bar b$ for $b$ in \eqref{bilinRepRem} we see that \eqref{rep2rem} also holds.
\end{proof}

\section{Complex convex bodies}\label{compConvBodySec}
In this Section we gather the necessary definitions and results from \cite{HYTONEN2024127565} that are needed to perform convex body domination. In particular, we will need complex normed space versions of these results. We do not claim any originality for the results or the setting in this Section, and even the translation of language from real to complex has been done before in more detail, see \cite[Appendix A]{DOMELEVO2024127956} and \cite[Section 3.1]{kakaroumpas2024matrixweighted}.

For the entirety of this Section we assume that $X,Y$ are complex normed spaces and $\vec x\coloneqq(x_i)_{i=1}^n,\vec y\coloneqq(y_i)_{i=1}^n$ are elements of $X^n,Y^n$ respectively. First we extend the standard Hermitian dot product of $\C^n$ in a natural way to $X^n\times\C^n,Y^n\times\C^n$. In other words, if $Z$ is a complex normed space, $\vec z \in Z^n$ and $\vec v=(v_i)_{i=1}^n\in\C^n$, then we write
\[
    \vec z\cdot \vec v \coloneqq \sum_{i=1}^n z_i\bar v_i.
\]
Then we extend a sesquilinear form $t\,\colon X\times Y\to \C$ to $X^n\times Y^n$ by
\[
    t(\vec x, \vec y)\coloneqq \sum_{i=1}^n t(\vec x\cdot \vec e_i, \vec y\cdot\vec e_i),
\]
where $\{\vec e_i\}_{i=1}^n$ is an orthonormal basis of $\C^n$.
It is important to note that we have used a sesquilinear form instead of a bilinear form in order to have an extension that is independent of the choice of basis. Furthermore, if $A\in \C^{n\times n}$ is a linear transformation, then 
\[
    t(A\vec x,\vec y)=t(\vec x,A^*\vec y).
\]
The convex bodies are defined by 
\[
    \llangle \vec x\rrangle_X\coloneqq \{\langle \vec x,x^*\rangle_{(X,X^*)}\setcolon x^*\in \Bar B_{X^*}\}\subset \C^n,
\]
where $X^*$ is the dual space of $X$ and $\langle \vec x,x^*\rangle_{(X,X^*)}\coloneqq(\langle x_i,x^*\rangle_{(X,X^*)})_{i=1}^n$. 
We say that a set $K\in\C^n$ is complex-symmetric around the origin, if for any $\vec k\in K$ and $\lambda\in\C$ with $|\lambda|=1$ it holds that $\lambda\vec k\in K$.
The set $\llangle \vec x\rrangle_X$ is compact, convex and complex-symmetric around the origin, and it follows that
the Minkovski dot product
\[
    \llangle \Vec{x}\rrangle_X\cdot\llangle \Vec{y}\rrangle_Y=\{a\cdot b\setcolon a\in\llangle \Vec{x}\rrangle_X,\,b\in\llangle \Vec{y}\rrangle_Y\}\subset \C
\]
is also compact, convex and complex-symmetric around the origin. This means that for some number $c\in [0,\infty)$ the above set is equal to a closed disk with radius $c$, and we may identify the whole set with $c$.

We end this Section with the main tool that we will use to transition from norm bounds to convex body bounds. The following is the complex normed space version of  \cite[Lemma 4.1]{HYTONEN2024127565}. For a proof in the complex case see \cite[Lemma C]{kakaroumpas2024matrixweightedold}.
\begin{lem}\label{TH1}
    Let $\mathcal{E}_x$ be the John ellipsoid of $\llangle \Vec{x}\rrangle_X$ such that 
    \[
        \mathcal{E}_x\subset \llangle \Vec{x}\rrangle_X\subset \sqrt{n}\mathcal{E}_x,
    \]
    and suppose that $\mathcal{E}_x$ is non-degenerate (i.e. of full dimension). Let $R_x$ be a linear transformation such that $R_x\mathcal{E}_x=\Bar{B}_{\mathbb{C}^n}$ and let $(\Vec{e}_i)_{i=1}^n$ be an orthonormal basis of $\mathbb{C}^n$. If 
    \[x_i\coloneqq R_x\Vec{x}\cdot\Vec{e}_i,\quad y_i\coloneqq ({R_x}^*)^{-1}\Vec{y}\cdot\Vec{e}_i,\quad i=1,\dots,n,\]
    then
    \[
        \sums{i}{1}{n}\|x_i\|_X\|y_i\|_Y\leq n^\frac{3}{2}\llangle \Vec{x}\rrangle_X\cdot\llangle \Vec{y}\rrangle_Y.
    \]
\end{lem}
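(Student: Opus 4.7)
The plan is to leverage two elementary transformation rules for complex convex bodies together with the John ellipsoid approximation. The first rule, which follows by linearity of the duality pairing in each component, is that $\llangle A\vec{z}\rrangle_Z = A\llangle \vec{z}\rrangle_Z$ for any $A\in\C^{n\times n}$ and any $\vec{z}\in Z^n$. The second, which uses only the adjoint identity $(Ra)\cdot c = a\cdot R^*c$ for the Hermitian dot product on $\C^n$, is the invariance
\[
    \llangle R_x\vec{x}\rrangle_X\cdot\llangle (R_x^*)^{-1}\vec{y}\rrangle_Y = \llangle \vec{x}\rrangle_X\cdot\llangle \vec{y}\rrangle_Y,
\]
since $(R_xa)\cdot((R_x^*)^{-1}b)=a\cdot R_x^*(R_x^*)^{-1}b=a\cdot b$.

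With these in hand, I would first apply $R_x$ to the John chain $\mathcal{E}_x\subset\llangle\vec{x}\rrangle_X\subset\sqrt n\,\mathcal{E}_x$ to obtain $\bar B_{\C^n}\subset \llangle R_x\vec{x}\rrangle_X\subset\sqrt n\,\bar B_{\C^n}$. Since $x_i$ is precisely the $i$-th component of $R_x\vec{x}$, the dual representation $\|x_i\|_X=\sup\{|a_i|\setcolon a\in\llangle R_x\vec{x}\rrangle_X\}$ combined with the outer containment gives $\|x_i\|_X\leq\sqrt n$ for every $i$.

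The heart of the proof is to bound $\|y_i\|_Y$ by the number $c\coloneqq\llangle\vec{x}\rrangle_X\cdot\llangle\vec{y}\rrangle_Y$ (the radius of the disk identified with the Minkowski dot product). Analogously, $\|y_i\|_Y = \sup\{|b\cdot\vec{e}_i|\setcolon b\in\llangle (R_x^*)^{-1}\vec{y}\rrangle_Y\}$. The inner containment $\bar B_{\C^n}\subset \llangle R_x\vec{x}\rrangle_X$ places $\vec{e}_i$ inside the convex body $\llangle R_x\vec{x}\rrangle_X$, so the number $\vec{e}_i\cdot b$ (or equivalently its conjugate $b\cdot\vec{e}_i$, which has the same modulus) belongs to $\llangle R_x\vec{x}\rrangle_X\cdot\llangle (R_x^*)^{-1}\vec{y}\rrangle_Y$; by the invariance identity this is the disk of radius $c$, whence $\|y_i\|_Y\leq c$. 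Summing the factors then yields $\sum_{i=1}^n \|x_i\|_X\|y_i\|_Y\leq n\cdot\sqrt n\cdot c=n^{3/2}\llangle\vec{x}\rrangle_X\cdot\llangle\vec{y}\rrangle_Y$.

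The main obstacle is careful bookkeeping of the sesquilinear conjugations: the Hermitian dot product is not bilinear, so one must verify the transformation rules and the passage between $\vec{e}_i\cdot b$ and $b\cdot\vec{e}_i$ honestly. The fact that the Minkowski dot product is complex-symmetric around the origin is precisely what makes these conjugations invisible in the final bound, so beyond this bookkeeping the argument is a one-line application of the John ellipsoid approximation.
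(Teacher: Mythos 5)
Your proof is correct and is essentially the same argument as in the cited references (the paper itself does not reprove this lemma, referring instead to Lemma~4.1 of \cite{HYTONEN2024127565} and its complex version in \cite{kakaroumpas2024matrixweightedold}). The two transformation rules you invoke, $\llangle A\vec z\rrangle_Z=A\llangle\vec z\rrangle_Z$ and the invariance of the Minkowski dot product under the pair $(R_x,(R_x^*)^{-1})$, together with the Hahn--Banach description of $\|x_i\|_X$ and $\|y_i\|_Y$ as suprema of coordinate moduli over the respective convex bodies, are precisely the ingredients of the standard proof, and your bookkeeping of the Hermitian conjugations is accurate.
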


\section{An abstract convex body domination principle}
For this section we will suppose that $\Lambda\colon L^r(\R^d)\times L^{r'}(\R^d)\to\C$ is a bounded sesquilinear form that satisfies  \eqref{supportCond}, \eqref{bddCond}, \eqref{localBddCond1} and \eqref{localBddCond2} with parameters $p_1,p_2\in[1,\infty)$. We note that the kernel size condition \eqref{KernelCond} will not be explicitly needed for the convex body domination principle proven in this section. However, in many applications, Lemma \ref{representations} is essential for checking the conditions \eqref{localBddCond1} and \eqref{localBddCond2}, and Lemma \ref{representations} does assume \eqref{KernelCond}.

We will now strive to show a convex body domination principle for $\Lambda$. The proof is based on techniques in \cite{10.2140/apde.2017.10.1255} coupled with the convex body domination theory from \cite{HYTONEN2024127565} that we discussed in the previous section. The following constant will appear throughout this section. For some absolute constant $\Theta$ we define
\begin{equation}\label{sparseConst}
    \mathcal{C}_{\Theta,d,n}\coloneqq 2^{\Theta d}[C_T(r)+C_L]n^{\frac{3}{2}+\frac{1}{p_1}+\frac{1}{p_2}}.
\end{equation}
The following result is contained in the proof of \cite[Lemma 2.7]{10.2140/apde.2017.10.1255}.
\begin{lem}\label{softLocalBdd}
    Let $Q$ be a dyadic cube in $\R^d$, $1\leq p_1,p_2<\infty$ and $\mathcal Q$ be a stopping collection with top $Q$.
    Then there exists an absolute constant $\vartheta$ such that
    \[
        |\Lambda_{\mathcal Q,\mu,\nu}(h_1,h_2)|\leq 2^{\vartheta d}\mathbf{C}|Q|\|h_1\|_{\mathcal{Y}_{p_1}(\mathcal Q)}\|h_2\|_{\mathcal{Y}_{p_2}(\mathcal Q)},
    \]
    where $\mathbf{C}\coloneqq C_T(r)+C_L$.
\end{lem}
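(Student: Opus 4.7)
The plan is to run a Calderón--Zygmund-type decomposition on each input relative to $\mathcal{Q}$, writing $h_i=g_i+b_i$ with $g_i$ uniformly bounded and $b_i\in\dot{\mathcal{X}}_{p_i}(\mathcal{Q})$, both controlled in norm by $\|h_i\|_{\mathcal{Y}_{p_i}(\mathcal{Q})}$. Expanding
\[
\Lambda_{\mathcal{Q},\mu,\nu}(h_1,h_2)=\Lambda_{\mathcal{Q},\mu,\nu}(g_1,g_2)+\Lambda_{\mathcal{Q},\mu,\nu}(b_1,g_2)+\Lambda_{\mathcal{Q},\mu,\nu}(g_1,b_2)+\Lambda_{\mathcal{Q},\mu,\nu}(b_1,b_2),
\]
I then estimate each cross term by a single tool: the global bound \eqref{bddCond} for $(g_1,g_2)$; the local hypothesis \eqref{localBddCond1} for $(b_1,g_2)$ and $(b_1,b_2)$; and \eqref{localBddCond2} for $(g_1,b_2)$.

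Concretely, with $\mathcal{Q}':=\{L\in\mathcal{Q}\setcolon L\subset Q\}$, I set
\[
g_i:=h_i\mathbbm{1}_{3Q\setminus\mathbf{sh}\mathcal{Q}}+\sum_{L\in\mathcal{Q}'}\langle h_i\rangle_L\mathbbm{1}_L,\qquad b_i:=\sum_{L\in\mathcal{Q}'}(h_i-\langle h_i\rangle_L)\mathbbm{1}_L,
\]
so that $b_i\in\dot{\mathcal{X}}_{p_i}(\mathcal{Q})$ by construction. The uniform bound $\|g_i\|_\infty\lesssim_d\|h_i\|_{\mathcal{Y}_{p_i}(\mathcal{Q})}$ is immediate on $3Q\setminus\mathbf{sh}\mathcal{Q}$, while on each $L\in\mathcal{Q}'$ it follows from Hölder's inequality $|\langle h_i\rangle_L|\leq\|h_i\|_{\avgL^{p_i}(L)}$ together with the observation that any cube of bounded dilation of $\hat L$ containing a chosen point $x\in\hat L$ also contains $L$, so $\|h_i\|_{\avgL^{p_i}(L)}\lesssim_d M_{p_i}h_i(x)$, and taking the infimum over $x\in\hat L$ yields the bound in terms of $\|h_i\|_{\mathcal{Y}_{p_i}(\mathcal{Q})}$. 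Combining this comparison with the pointwise inequality $M_{p_i}b_i\leq M_{p_i}h_i+\|g_i\|_\infty$ then gives $\|b_i\|_{\mathcal{Y}_{p_i}(\mathcal{Q})}\lesssim_d\|h_i\|_{\mathcal{Y}_{p_i}(\mathcal{Q})}$.

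For the $(g_1,g_2)$ piece I would use the identity $\Lambda_{\mathcal{Q},\mu,\nu}=\Lambda_{Q,\mu,\nu}-\sum_{L\in\mathcal{Q}'}\Lambda_{L,\mu,\nu}$ together with \eqref{bddCond}, invoking the support condition \eqref{supportCond} to restrict the second argument to $3Q$ (respectively $3L$) and using the trivial $L^r$ estimate $\|g_i\mathbbm{1}_S\|_r\leq|S|^{1/r}\|g_i\|_\infty$; the disjointness $\sum_{L\in\mathcal{Q}'}|L|\leq|Q|$ then produces the total contribution $\lesssim_d C_T(r)|Q|\|g_1\|_\infty\|g_2\|_\infty$. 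The piece $(b_1,g_2)$ is handled by \eqref{localBddCond1} using $\|g_2\|_{\mathcal{Y}_{p_2}(\mathcal{Q})}\leq\|g_2\|_\infty$; the piece $(b_1,b_2)$ by \eqref{localBddCond1} using the $\mathcal{Y}_{p_2}$-estimate for $b_2$ from the previous paragraph; and $(g_1,b_2)$ by \eqref{localBddCond2} via $\|g_1\|_{\mathcal{Y}_\infty(\mathcal{Q})}\leq\|g_1\|_\infty$. Collecting the four contributions and absorbing the dimensional factors into $2^{\vartheta d}$ gives the claim with $\mathbf{C}=C_T(r)+C_L$.

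The only real obstacle is bookkeeping: verifying that the Calderón--Zygmund decomposition genuinely produces elements of $\mathcal{Y}_{p_i}(\mathcal{Q})$ and $\dot{\mathcal{X}}_{p_i}(\mathcal{Q})$ with dimension-uniform norms. This reduces to the doubling-type comparison between $L$, $\hat L$ and the $M_{p_i}$-averages indicated above, and presents no substantive difficulty beyond that.
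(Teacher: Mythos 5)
Your strategy---Calder\'on--Zygmund decompose each $h_i$ relative to $\mathcal Q$ and estimate the four resulting cross terms by \eqref{bddCond}, \eqref{localBddCond1} and \eqref{localBddCond2} respectively---is the same argument that underlies \cite[Lemma 2.7]{10.2140/apde.2017.10.1255}, which the paper cites for this result, so the approach matches.

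There is one bookkeeping slip that breaks the four-term expansion as written. You define both $g_i$ and $b_i$ by summing over $\mathcal Q'=\{L\in\mathcal Q\setcolon L\subset Q\}$, so $g_2+b_2$ agrees with $h_2$ only on $Q\cup(3Q\setminus\mathbf{sh}\mathcal Q)$; on the cubes of $\mathcal Q\setminus\mathcal Q'$ both $g_2$ and $b_2$ vanish. But $\Lambda_{\mathcal Q,\mu,\nu}(h_1,h_2)=\Lambda_{\mathcal Q,\mu,\nu}(h_1\mathbbm 1_Q,h_2\mathbbm 1_{3Q})$, i.e.\ only the first argument is effectively restricted to $Q$, while the second runs over $3Q$, which is exactly where the cubes of $\mathcal Q\setminus\mathcal Q'$ live. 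Hence with your definitions $\Lambda_{\mathcal Q,\mu,\nu}(h_1,h_2)$ is not equal to the sum of the four pieces. The fix is simply to run the decomposition of $h_2$ over all of $\mathcal Q$; this is admissible since $\dot{\mathcal X}_{p_2}(\mathcal Q)$ only requires support in $\mathbf{sh}\mathcal Q$, not in $Q$, so $b_2$ defined with the full sum still falls under \eqref{localBddCond1} and \eqref{localBddCond2}. (Keeping $\mathcal Q'$ for the decomposition of $h_1$ is fine precisely because the first slot is cut to $Q$.) With that correction, the norm comparisons $\|g_i\|_\infty\lesssim_d\|h_i\|_{\mathcal Y_{p_i}(\mathcal Q)}$ and $\|b_i\|_{\dot{\mathcal X}_{p_i}(\mathcal Q)}\lesssim_d\|h_i\|_{\mathcal Y_{p_i}(\mathcal Q)}$ and the four applications of the hypotheses go through as you describe, with the dimensional factors (the $|3Q|/|Q|=3^d$ ratio in the \eqref{bddCond} step, the $\widehat L$-to-$L$ doubling comparison, and the $M_{p_i}$ constants) absorbed into $2^{\vartheta d}$.
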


We will now prove the following Proposition, which is a convex body version of \cite[Equation (2-20)]{10.2140/apde.2017.10.1255}.
\begin{prop}\label{inductionBase}
Let $Q_0$ be a dyadic cube in $\R^d$ and $1\leq p_1,p_2<\infty$. Then there exists  absolute constants $\vartheta, \Theta$ and a stopping collection $\mathcal S_1$ with top $Q_0$ such that
\begin{equation}\label{StopColCubeSum}
    \sum_{L\in \mathcal S_1}|L|\leq 2^{-\vartheta d}|Q_0|
\end{equation}
and
\begin{align}\label{BasecaseEst}
    |\Lambda_{Q_0,\mu,\nu}(\vec f, \vec g)|\leq \mathcal{C}_{\Theta,d,n}|Q_0|\llangle\vec f\rrangle_{\avgL^{p_1}(3Q_0)}\cdot& \llangle\vec g\rrangle_{\avgL^{p_2}(3Q_0)}+\sum_{\substack{L\in\mathcal S_1\\L\subset Q_0}}|\Lambda_{L,\mu,\nu}(\vec f,\vec g)|,
\end{align}
where $\mathcal{C}_{\Theta,d,n}$ is as in \eqref{sparseConst}.
\end{prop}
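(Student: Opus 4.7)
The plan is to reduce the claim to the scalar version \cite[Equation (2-20)]{10.2140/apde.2017.10.1255} via the John ellipsoid device encapsulated in Lemma \ref{TH1}, apply the scalar stopping-time argument to each resulting scalar component, and reassemble using the basis-independence of the sesquilinear extension.

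First I would apply Lemma \ref{TH1} to the convex bodies $\llangle\vec f\rrangle_{\avgL^{p_1}(3Q_0)}$ and $\llangle\vec g\rrangle_{\avgL^{p_2}(3Q_0)}$, handling the case of a degenerate John ellipsoid by a standard approximation argument. This produces a linear map $R$ on $\C^n$ and scalar functions $f_i := R\vec f\cdot\vec e_i$, $g_i := (R^*)^{-1}\vec g\cdot\vec e_i$ with
\[
\sums{i}{1}{n}\|f_i\|_{\avgL^{p_1}(3Q_0)}\|g_i\|_{\avgL^{p_2}(3Q_0)}\leq n^{3/2}\,\llangle\vec f\rrangle_{\avgL^{p_1}(3Q_0)}\cdot\llangle\vec g\rrangle_{\avgL^{p_2}(3Q_0)}.
\]
The identity $\Lambda(A\vec x,\vec y)=\Lambda(\vec x,A^*\vec y)$ together with the basis-independence of the sesquilinear extension gives the crucial reduction
\[
\Lambda_{Q_0,\mu,\nu}(\vec f,\vec g)=\sums{i}{1}{n}\Lambda_{Q_0,\mu,\nu}(f_i,g_i),
\]
and the same identity holds with $Q_0$ replaced by any $L\in\mathscr{D}$ or by a stopping form $\Lambda_{\mathcal{Q},\mu,\nu}$.

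Next I would construct one stopping collection $\mathcal{S}_1$ serving all $2n$ scalar components at once. The plan is to fix thresholds $\Gamma_j := A n^{1/p_j}2^{\vartheta d/p_j}$ ($j=1,2$) for a sufficiently large absolute constant $A$, take the maximal dyadic subcubes $L$ of $Q_0$ on which at least one of the $2n$ inequalities
\[
\|f_i\|_{\avgL^{p_1}(L)} > \Gamma_1\|f_i\|_{\avgL^{p_1}(3Q_0)}\quad\text{or}\quad \|g_i\|_{\avgL^{p_2}(L)} > \Gamma_2\|g_i\|_{\avgL^{p_2}(3Q_0)}
\]
holds, and then inflate the collection through the mechanism used in \cite{10.2140/apde.2017.10.1255} so that the separation axioms of a stopping collection with top $Q_0$ are met. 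A union bound over the $2n$ inequalities against the weak $(p_j,p_j)$ bound for $M_{p_j}^{\mathscr{D}}$ yields \eqref{StopColCubeSum} once $A$ is chosen large enough. For each $i$ I would make the standard good/bad decomposition $f_i = g_{f_i}+b_{f_i}$ with $b_{f_i}:=\sum_{L\in\mathcal{S}_1}(f_i-\langle f_i\rangle_L)\mathbbm{1}_L\in\dot{\mathcal{X}}_{p_1}(\mathcal{S}_1)$, and analogously $g_i = g_{g_i}+b_{g_i}$; the non-stopping condition supplies
\[
\|g_{f_i}\|_{\mathcal{Y}_\infty(\mathcal{S}_1)}+\|g_{f_i}\|_{\mathcal{Y}_{p_1}(\mathcal{S}_1)}+\|b_{f_i}\|_{\dot{\mathcal{X}}_{p_1}(\mathcal{S}_1)}\lesssim\Gamma_1\|f_i\|_{\avgL^{p_1}(3Q_0)}
\]
and symmetrically on the $g$ side.

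Finally, the definition of $\Lambda_{\mathcal{S}_1,\mu,\nu}$ rewrites
\[
\Lambda_{Q_0,\mu,\nu}(\vec f,\vec g) = \Lambda_{\mathcal{S}_1,\mu,\nu}(\vec f,\vec g)+\sum_{\substack{L\in\mathcal{S}_1\\L\subset Q_0}}\Lambda_{L,\mu,\nu}(\vec f,\vec g),
\]
so it suffices to dominate $|\Lambda_{\mathcal{S}_1,\mu,\nu}(\vec f,\vec g)|\leq\sum_i|\Lambda_{\mathcal{S}_1,\mu,\nu}(f_i,g_i)|$. Splitting each summand into the four cross-terms coming from the good/bad decompositions, I would bound the $(g_{f_i},g_{g_i})$ term by Lemma \ref{softLocalBdd}, the two terms with $b_{f_i}$ on the left by \eqref{localBddCond1}, and the $(g_{f_i},b_{g_i})$ term by \eqref{localBddCond2}; each cross-term is then controlled by $2^{\vartheta d}(C_T(r)+C_L)\Gamma_1\Gamma_2|Q_0|\|f_i\|_{\avgL^{p_1}(3Q_0)}\|g_i\|_{\avgL^{p_2}(3Q_0)}$. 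Summing over $i$ and invoking the John ellipsoid inequality combines the $n^{1/p_1+1/p_2}$ from the thresholds with the $n^{3/2}$ from Lemma \ref{TH1} to produce exactly the constant $\mathcal{C}_{\Theta,d,n}$ of \eqref{sparseConst}, yielding \eqref{BasecaseEst}. The hard part will be verifying that the full list of separation axioms of a stopping collection can be enforced on top of the maximal stopping cubes without destroying the measure bound \eqref{StopColCubeSum}, and tracking the $n$-dependence cleanly through both the stopping step and the John ellipsoid step so that the exponent $3/2+1/p_1+1/p_2$ in \eqref{sparseConst} appears with an absolute constant in front.
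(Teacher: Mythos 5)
Your overall strategy matches the paper's: reduce to scalar components via the John ellipsoid lemma (Lemma \ref{TH1}), run a single stopping-time construction that serves all $2n$ scalar components at once, and feed the resulting stopping form into the local boundedness hypotheses. Two remarks on where your route diverges and where it is redundant.

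First, the Calder\'on--Zygmund good/bad decomposition of $f_i$ and $g_i$ that you propose, followed by term-by-term applications of \eqref{localBddCond1}, \eqref{localBddCond2} and Lemma \ref{softLocalBdd} to the four cross-terms, is unnecessary. Lemma \ref{softLocalBdd} already applies to arbitrary $h_1\in\mathcal{Y}_{p_1}(\mathcal{S}_1)$, $h_2\in\mathcal{Y}_{p_2}(\mathcal{S}_1)$ --- it internalizes the CZ decomposition (it is the content of the proof of \cite[Lemma 2.7]{10.2140/apde.2017.10.1255}). The $\mathcal{Y}_p$ norm already allows a function to be unbounded on $\mathbf{sh}\,\mathcal{S}_1$ and only measures its size through the maximal function near $\widehat L$; so once you observe that the stopping construction gives $\|f_i\mathbbm 1_{3Q_0}\|_{\mathcal{Y}_{p_1}(\mathcal{S}_1)}\lesssim n^{1/p_1} 2^{\Theta d/p_1}\|f_i\|_{\avgL^{p_1}(3Q_0)}$ (and likewise for $g_i$), you can apply Lemma \ref{softLocalBdd} to $|\Lambda_{\mathcal{S}_1,\mu,\nu}(f_i,g_i)|$ directly, with no need to split off mean-zero pieces. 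This is what the paper does and it removes the work of checking that $b_{f_i}\in\dot{\mathcal{X}}_{p_1}(\mathcal{S}_1)$ with the right norm, which is not as immediate as you indicate.

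Second, the construction of the stopping collection. You propose taking the maximal dyadic cubes of $Q_0$ where one of the $2n$ averages is large and then ``inflating through the mechanism of CACDPO''; you correctly flag this as the hard part, but you leave it unresolved, and the naive maximal-cube stopping collection fails the separation axioms. The paper's construction sidesteps the issue entirely: form the open set $E_{Q_0}\coloneqq\bigcup_i\{x\in 3Q_0\colon M_{p_1}(f_i\mathbbm 1_{3Q_0})(x)>(2^{\Theta d/4}n)^{1/p_1}\|f_i\|_{\avgL^{p_1}(3Q_0)}\}\cup\{\cdots g_i\cdots\}$ using the \emph{non-dyadic} maximal function, and let $\mathcal{S}_1$ be the maximal $L\in\mathscr{D}$ with $9L\subset E_{Q_0}$. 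Then $\mathbf{sh}\,\mathcal{S}_1=\bigcup_L L=\bigcup_L 9L=E_{Q_0}$, the separation property \eqref{neigbour1} falls out from maximality (if $7L\cap 7L'\neq\emptyset$ and $s_L\le s_{L'}-8$ then $9P(L)\subset 9L'\subset E_{Q_0}$, contradicting maximality of $L$), and the $\mathcal{Y}$-norm control comes for free because the dyadic parent $P(L)$ satisfies $9P(L)\not\subset E_{Q_0}$, giving $\inf_{\widehat L}M_{p_1}(f_i\mathbbm 1_{3Q_0})\le\inf_{9P(L)}M_{p_1}(f_i\mathbbm 1_{3Q_0})\le$ threshold. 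Your union bound and the weak $(p_j,p_j)$ estimate for $M_{p_j}$ give $|E_{Q_0}|\le 2^{-\vartheta d}|Q_0|$ and hence \eqref{StopColCubeSum}, exactly as you say. So the claims you flagged as ``the hard part'' do go through, but the particular level-set construction above is what makes the separation axioms and the $n^{1/p_1+1/p_2}$ bookkeeping come out cleanly.

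One small point on the degenerate John ellipsoid: rather than an approximation argument, the paper reduces to the span $H$ of $\llangle\vec f\rrangle_{\avgL^{p_1}(3Q_0)}$ by inserting the orthogonal projection $P$ onto $H$ into both sides, using that $\vec f = P^*P\vec f$ a.e.\ and that the sesquilinear extension satisfies $\Lambda(P^*P\vec f,\vec g)=\Lambda(P\vec f,P\vec g)$; this is cleaner and exact.
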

\begin{proof}
We begin the proof by noticing that we may assume that the John ellipsoid of $\llangle \vec f\rrangle_{\avgL^{p_1}(3Q_0)}$ is non-degenerate. Indeed, let $P$ denote the orthogonal projection of $\C^n$ onto the linear span of $\llangle f\rrangle_{\textit{\L}^p(3Q_0)}$ that we denote by $H$. Note that now we have $\vec f = P\vec f=P^*P\vec f$ almost everywhere, which means that on the left-hand side of \eqref{BasecaseEst} we have 
\[
    \Lambda_{Q_0,\mu,\nu}(\vec f, \vec g)=\Lambda_{Q_0,\mu,\nu}(P^* P\vec f, \vec g)=\Lambda_{Q_0,\mu,\nu}(P\vec f, P\vec g),
\]
and on the right-hand side the same identity for $\Lambda_{L,\mu,\nu}$ and
\begin{align*}
    \llangle P\Vec{f}\rrangle_{\textit{\L}^{p_1}(3Q_0)}\cdot \llangle P\Vec{g}\rrangle_{\textit{\L}^{p_2}(3Q_0)}&=\llangle P^* P\Vec{f}\rrangle_{\textit{\L}^{p_1}(3Q_0)}\cdot \llangle\Vec{g}\rrangle_{\textit{\L}^{p_2}(3Q_0)}\\&=\llangle \Vec{f}\rrangle_{\textit{\L}^{p_1}(3Q_0)}\cdot \llangle\Vec{g}\rrangle_{\textit{\L}^{p_2}(3Q_0)}.
\end{align*}
Thus if suffices to prove the statement with $\vec f,\vec g$ taking values in $H$, and the John ellipsoid of $\llangle f\rrangle_{\textit{\L}^p(3Q_0)}$ is non-degenerate as a subset of $H$.

For each cube $Q$, we will consider the linear transformation  $R_Q$ that maps the John ellipsoid of the convex body $\llangle\vec f\rrangle_{\avgL^{p_1}(3Q)}$ to the unit ball of $\C^n$. We fix an orthonormal basis $(\vec e_i)_{i=1}^n$ of $\C^n$, and
write $f_{i}^Q\coloneqq R_Q\vec f\cdot \vec e_i$ and $g_{i}^Q\coloneqq ({R_Q}^*)^{-1}\vec g\cdot \vec e_i$.

We also define the sets
\[
    E_{Q,f,i}\coloneqq\{x\in3Q\setcolon\frac{M_{p_1}(f_i^Q\mathbbm 1_{3Q})(x)}{\|f^Q_i\|_{\avgL^{p_1}(3Q)}}>(2^\frac{\Theta d}{4}n)^\frac{1}{p_1}\},
\]
\[
    E_{Q,g,i}\coloneqq\{x\in3Q\setcolon\frac{M_{p_2}(g_i^Q\mathbbm 1_{3Q})(x)}{\|g_i^Q\|_{\avgL^{p_2}(3Q)}}>(2^\frac{\Theta d}{4}n)^\frac{1}{p_2}\},
\]
\[E_{Q,i}\coloneqq E_{Q,f,i}\cup E_{Q,g,i}\quad \text{ and }\quad E_Q\coloneqq\bigcup_{i=1}^nE_{Q,i}.\]
To simplify notation we will write $f_i\coloneqq f_i^{Q_0}$ and $g_i\coloneqq g_i^{Q_0}$ during this proof. Notice that due to the weak-type boundedness of the maximal function, provided that $\Theta$ is sufficiently large, we have \[|E_Q|\leq 2^{-\vartheta d}|Q|,\numberthis\label{majorEst}\]
where $\vartheta$ is the absolute constant from Lemma \ref{softLocalBdd}. 

Then we set $E_0\coloneqq{3Q_0}$, $\mathcal{S}_0\coloneqq\{Q_0\}$,
\[E_1\coloneqq E_{Q_0}\quad\text{ and }\quad\mathcal{S}_1\coloneqq\text{maximal cubes $L\in \mathscr{D}$ such that $9L\subset E_1$}.\]
Now $\mathcal{S}_1$ is a pairwise disjoint collection, and the following properties are satisfied: 
\begin{align}
    &E_1=\bigcup_{L\in\mathcal{S}_1}L= \bigcup_{L\in\mathcal{S}_1}9L\subset E_0,\label{nested1}\\
    &|Q_0\setminus E_1|\geq (1-2^{-d\vartheta})|Q_0|,\label{sparseness1}\\
    &L,L'\in\mathcal{S}_1,\, 7L\cap7L'\neq\emptyset \quad\Rightarrow\quad |s_L-s_{L'}|<8. \label{neigbour1}
\end{align}
Indeed, disjointness and \eqref{nested1} are true by construction, and \eqref{sparseness1} follows from \eqref{majorEst}. For \eqref{neigbour1} suppose instead that $7L\cap7L'$
is not empty for some
$s_L \leq s_{L'}-8$. We recall that $\widehat L$ is the $2^5$-fold dilate of $L$. It is not hard to check that $\widehat L \subset 9L'$, which implies
that the $9$-fold dilate of the dyadic parent of $L$ is contained in $9L'$
as well, contradicting the
maximality of $L$. 

From these properties it follows that $\mathcal{S}_1$ is a stopping collection with top $Q_0$, and the rest of the proof is dedicated to checking that this collection satisfies \eqref{StopColCubeSum} and \eqref{BasecaseEst}.

Due to \eqref{nested1} and \eqref{majorEst}, we have
\[
    \sum_{L\in \mathcal S_1} |L|=|E_1|=|E_{Q_0}|\leq 2^{-\vartheta d}|Q_0|,
\]
which is exactly \eqref{StopColCubeSum}.

In order to check \eqref{BasecaseEst} we apply Lemma \ref{softLocalBdd} to get
\begin{align*}
     |\Lambda_{Q_0,\mu,\nu}(\vec f,\vec g)|&=\Big|\Lambda_{\mathcal S_1,\mu,\nu}(\vec f,\vec g)+\sum_{\substack{L\in\mathcal S_1\\L\subset Q_0}}\Lambda_{L,\mu,\nu}(\vec f,\vec g)\Big|\\&\leq\sum_{i=1}^n|\Lambda_{\mathcal S_1,\mu,\nu}(f_i,g_i)|+\sum_{\substack{L\in\mathcal S_1\\L\subset Q_0}}|\Lambda_{L,\mu,\nu}(\vec f,\vec g)|\\&\leq2^{\vartheta d}\mathbf{C}|Q_0| \sum_{i=1}^n\|f_i\|_{\mathcal{Y}_{p_1}(\mathcal S_1)}\|g_i\|_{\mathcal{Y}_{p_2}(\mathcal S_1)}\\&\qquad\qquad\qquad\qquad\qquad\qquad\qquad+\sum_{\substack{L\in\mathcal S_1\\L\subset Q_0}}|\Lambda_{L,\mu,\nu}(\vec f,\vec g)|,
\end{align*}
where $\mathbf{C}\coloneqq C_T(r)+C_L$. By \eqref{nested1} we have that $\mathbf{sh} \mathcal{S}_1=\bigcup_{L\in\mathcal{S}_1}L=E_1=E_{Q_0}$ so we get
\[
    \sup_{x\notin \mathbf{sh}\mathcal S_1}|f_i(x)\mathbbm 1_{3Q_0}(x)|\leq (2^\frac{\Theta d}{4}n)^\frac{1}{p_1}\|f_i\|_{\avgL^{p_1}(3Q_0)}, \quad i=1,\dots,n.
\] 
From the maximality of $\mathcal{S}_1$ we get that if $L\in \mathcal{S}_1$, then the dyadic parent $P(L)$ of $L$ satisfies $9P(L)\not\subset E_1=E_{Q_0}$, and hence 
\begin{align*}
    \sup_{L\in\mathcal S_1}\inf_{x\in\widehat L} M_{p_1}(f_i\mathbbm 1_{3Q_0})(x)&\leq \sup_{L\in\mathcal S_1}\inf_{x\in 9P(L)} M_{p_1}(f_i\mathbbm 1_{3Q_0})(x)\\&\leq (2^\frac{\Theta d}{4}n)^\frac{1}{p_1}\|f_i\|_{\avgL^{p_1}(3Q_0)},\qquad \quad i=1,\dots,n.
\end{align*}
Similarly, the above estimates hold with $(g_i,p_2)$ instead of $(f_i,p_1)$. Since $p_1,p_2\geq1$, these estimates imply that
\[
    \|f_i\|_{\mathcal{Y}_{p_1}(\mathcal S_1)}\|g_i\|_{\mathcal{Y}_{p_2}(\mathcal S_1)} \leq 2^\frac{\Theta d}{2}n^{\frac{1}{p_1}+\frac{1}{p_2}}\|f_i\|_{\avgL^{p_1}(3Q_0)}\|g_i\|_{\avgL^{p_1}(3Q_0)}.
\]
Now we apply Lemma \ref{TH1} to conclude
\begin{align*}
    \sum_{i=1}^n\|f_i\|_{\mathcal{Y}_{p_1}(\mathcal S_1)}\|g_i\|_{\mathcal{Y}_{p_2}(\mathcal S_1)}&\leq 2^\frac{\Theta d}{2}n^{\frac{1}{p_1}+\frac{1}{p_2}}\sum_{i=1}^n\|f_i\|_{\avgL^{p_1}(3Q_0)}\|g_i\|_{\avgL^{p_1}(3Q_0)}\\
    &\leq 2^\frac{\Theta d}{2} n^{\frac{3}{2}+\frac{1}{p_1}+\frac{1}{p_2}}\llangle\vec f\rrangle_{\avgL^{p_1}(3Q_0)}\cdot \llangle\vec g\rrangle_{\avgL^{p_2}(3Q_0)}.
\end{align*}
\end{proof}
The above Proposition provides the base of the induction that paves the way for the abstract convex body domination principle that we formulate in the following Theorem. It is the convex body counterpart of \cite[Theorem C]{10.2140/apde.2017.10.1255}.
\begin{thm}\label{main}
    Let $1\leq p_1,p_2<\infty$, and let $\Lambda$ be a sesquilinear form satisfying \eqref{supportCond},  \eqref{bddCond}, \eqref{localBddCond1} and \eqref{localBddCond2}. Then there exists a positive absolute constant $\Theta$ such that $\Lambda$ satisfies convex body domination. More explicitly, for all $\vec f\in L^{p_1}(\R^d,\C^n)$ and $\vec g\in L^{p_2}(\R^d,\C^n)$ with compact support, there exists a sparse collection $\mathcal{S}$ of cubes in $\R^d$ such that we have 
    \[
        \sup_{\mu<\nu}|\Lambda_\mu^\nu(\vec f, \vec g)|\leq \mathcal{C}_{\Theta,d,n}\sum_{Q\in\mathcal{S}}|Q|\llangle\vec f\rrangle_{\avgL^{p_1}(Q)}\cdot \llangle\vec g\rrangle_{\avgL^{p_2}(Q)},
    \]
    where $\mathcal{C}_{\Theta,d,n}$ is the constant from \eqref{sparseConst}.
\end{thm}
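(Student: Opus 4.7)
The plan is to iterate Proposition \ref{inductionBase} starting from a single cube that contains the supports of $\vec f, \vec g$, and to pass to the limit.

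\emph{Localization.} Since $\vec f, \vec g$ have compact support, say in $B(0, R)$, the annular support condition \eqref{supportCond} forces $K_s(x,y)\vec f(y)\bar{\vec g}(x) = 0$ whenever $2^{s-2} > 2R$. Consequently, $\nu \mapsto \Lambda_\mu^\nu(\vec f, \vec g)$ is constant once $\nu \geq \nu_0 := \lceil \log_2(8R) \rceil$, so I may truncate $\nu$ at $\nu_0$. Picking a dyadic lattice $\mathscr{D}$ and a cube $Q_0 \in \mathscr{D}$ with $s_{Q_0} > \nu_0$ and $\supp \vec f \cup \supp \vec g \subset Q_0$ (any sufficiently shifted standard lattice provides one), the definition of the local form gives
\[
    \Lambda_\mu^\nu(\vec f, \vec g) = \Lambda_{Q_0, \mu, \nu}(\vec f, \vec g) \quad \text{for every } \mu < \nu.
\]

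\emph{Iteration.} I would then apply Proposition \ref{inductionBase} to $Q_0$ to obtain a stopping collection $\mathcal{S}_1(Q_0)$, and iterate: recursively define $\mathcal{S}_k$ as the union over $Q \in \mathcal{S}_{k-1}$ of the proper subcubes in $\mathcal{S}_1(Q)$, with $\mathcal{S}_0 := \{Q_0\}$, and set $\mathcal{S} := \bigcup_{k \geq 0} \mathcal{S}_k$. Each $L \in \mathcal{S}_1(Q)$ with $L \subset Q$ must be a proper dyadic subcube of $Q$, since $L = Q$ would require the impossible inclusion $9Q \subset 3Q$. Telescoping \eqref{BasecaseEst} over $k$ levels yields
\[
    |\Lambda_{Q_0,\mu,\nu}(\vec f,\vec g)| \leq \mathcal{C}_{\Theta,d,n}\sum_{Q\in\bigcup_{j<k}\mathcal{S}_j}|Q|\llangle\vec f\rrangle_{\avgL^{p_1}(3Q)}\cdot\llangle\vec g\rrangle_{\avgL^{p_2}(3Q)} + R_k,
\]
where $R_k := \sum_{Q\in\mathcal{S}_k}|\Lambda_{Q,\mu,\nu}(\vec f,\vec g)|$.

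\emph{Termination, sparseness, and conclusion.} The proper-subcube observation forces $s_Q \leq s_{Q_0} - k$ for every $Q \in \mathcal{S}_k$, so for $k > s_{Q_0} - \mu$ the sum defining $\Lambda_{Q,\mu,\nu}$ is empty and $R_k = 0$. Letting $k \to \infty$ produces the bound with the family $\mathcal{S}$. Setting $E_Q := Q \setminus \bigcup_{L \in \mathcal{S}_1(Q),\, L \subset Q} L$, inequality \eqref{StopColCubeSum} gives $|E_Q| \geq (1 - 2^{-\vartheta d})|Q|$, and the tree structure of the iteration renders the $E_Q$'s pairwise disjoint, so $\mathcal{S}$ is sparse. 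Replacing $\mathcal{S}$ by $\{3Q : Q \in \mathcal{S}\}$, which remains sparse and absorbs a factor $3^d$ into $\mathcal{C}_{\Theta,d,n}$, converts the $3Q$-averages into $Q$-averages. Since the resulting bound is independent of $(\mu, \nu)$, taking the supremum on the left-hand side finishes the argument. The main obstacle is this termination step: one must confirm that the iteration can be run to completion, which is resolved by combining the strict scale-shrinkage of stopping cubes with the observation that $\Lambda_{Q,\mu,\nu}$ is trivial once $s_Q \leq \mu$, making the process finitary for each fixed $(\mu,\nu)$.
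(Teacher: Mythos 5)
Your proof is correct and follows essentially the same strategy as the paper: choose a large top cube $Q_0$, iterate Proposition \ref{inductionBase} to build the sparse tree, note that the error $R_k$ eventually vanishes because stopping cubes strictly decrease in scale, verify sparseness via the disjoint major subsets $E_Q$, and rename $3Q\mapsto Q$. The one place where you diverge — and actually tighten the argument slightly — is the treatment of the $(\mu,\nu)$-dependence. The paper picks $Q_0$ with $s_{Q_0}$ larger than the top kernel scale $\nu$, so its sparse collection $\mathcal{S}'$ genuinely depends on $(\mu,\nu)$, and it finishes with a generic supremum argument that absorbs a factor of $2$ into $\mathcal{C}_{\Theta,d,n}$. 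You instead observe that the support condition lets you truncate $\nu$ at $\nu_0=\lceil\log_2(8R)\rceil$, so $Q_0$ and hence the entire tree $\mathcal{S}=\bigcup_k\mathcal{S}_k$ can be fixed once for all $(\mu,\nu)$. For this to close cleanly you should state explicitly the fact you are implicitly using: the stopping collection $\mathcal{S}_1(Q)$ from Proposition \ref{inductionBase} is constructed from the level sets $E_{Q,f,i},E_{Q,g,i}$ of the maximal functions, so it depends only on $\vec f,\vec g,Q$ (not on $\mu,\nu$), while \eqref{BasecaseEst} holds for that single collection uniformly over all $\mu,\nu$. Two cosmetic remarks: the renaming $Q\mapsto 3Q$ does not require absorbing $3^d$ — since $|Q|\leq|3Q|$, the bound with $|Q|$ already implies the one with $|3Q|$; and your proper-subcube justification ($L=Q$ would force $9Q\subset E_Q\subset 3Q$) is a nice explicit check that the paper omits.
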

\begin{proof}
    We begin by showing that there exists a sparse collection $\mathcal{S}'$, dependent on $\nu,\mu$, the input functions and fixed parameters, such that 
    \[
        |\Lambda_\mu^\nu(\vec f, \vec g)|\leq \mathcal{C}_{\Theta,d,n}\sum_{Q\in\mathcal{S}'}|Q|\llangle\vec f\rrangle_{\avgL^{p_1}(Q)}\cdot \llangle\vec g\rrangle_{\avgL^{p_2}(Q)},\numberthis\label{mainEst}
    \]
    and at the end of the proof discuss how to get rid of the dependence on $\nu,\mu$.
    
    Fix $\mu,\nu\in\Z$ and functions $\vec f\in L^{p_1}(\R^d,\C^n)$ and $\vec g\in L^{p_2}(\R^d,\C^n)$ with compact support. We may assume that $K_s=0$, if $s\notin(\mu,\nu]$. We choose the dyadic lattice $\mathscr D$ so that any compact set is contained in some dyadic cube in $\mathscr D$. Then there exists $Q_0\in\mathscr D$ such that $\supp\vec f\subset Q_0$ and $s_{Q_0}$ is bigger
    than the largest nonzero scale occurring in the kernel. Thus it suffices to show \eqref{mainEst} with $\Lambda_{Q_0,\mu,\nu}$ in place of $\Lambda_\mu^\nu$. 

    We prove by induction that, for each integer $k\geq0$,
    \begin{equation}\label{inductionStep}
    \begin{aligned}
        |\Lambda_{Q_0,\mu,\nu}(\vec f,\vec g)|\leq \mathcal{C}_{\Theta,d,n}&\sum_{l=0}^k\sum_{S\in\mathcal{S}_l'}|S|\llangle\vec f\rrangle_{\avgL^{p_1}(3S)}\cdot\llangle\vec g\rrangle_{\avgL^{p_2}(3S)}\\+&\sum_{L\in \mathcal{S}_{k+1}'}|\Lambda_{L,\mu,\nu}(\vec f,\vec g)|,
    \end{aligned}
    \end{equation}
    where each $\mathcal{S}'_l$ is a pairwise disjoint collection of dyadic subcubes of $Q_0$ such that
    \[
        \mathcal{S}'_0\coloneqq \{Q_0\},\quad \mathcal{S}_{l+1}'\coloneqq \bigcup_{Q\in\mathcal S_l'}\mathcal{S}'(Q),
    \]
    and further each $\mathcal S'(Q)$ is a pairwise disjoint collection of dyadic subcubes of $Q$ such that
    \[
        \sum_{R\in \mathcal{S}'(Q)}|R|\leq 2^{-\vartheta d} |Q|.
    \]
    The base case $k=0$ is provided by Proposition \ref{inductionBase} by setting \[\mathcal{S}_1'=\mathcal{S}'(Q_0)\coloneqq\{L\in \mathcal S_1\setcolon L\subset Q_0\},\]
    where $\mathcal{S}_1$ is the stopping collection produced by Proposition \ref{inductionBase}.

    If we assume the induction claim for some $k\geq 0$, the case $k+1$ follows immediately by applying the base step (i.e., Proposition \ref{inductionBase}), to each $L\in \mathcal S'_{k+1}$. Thus the inductive claim is verified for all $k$, and we are done with the proof of \eqref{inductionStep}.

    Next we argue that after sufficiently many iterations the second term on the right-hand side of \eqref{inductionStep} will vanish. 
    Note that each $R \in \mathcal S'(Q)$ is necessarily a strict dyadic subcube of $Q$. Hence
    each $Q \in \mathcal S'_k$ is at least $k$ generations smaller than $Q_0$. Thus if $L\in \mathcal{S}'_{k+1}$ and $k\geq s_{Q_0}-\mu$, then $s_L<s_{Q_0}-k\leq \mu$ and hence $\Lambda_{L,\mu,\nu}=0$.

    We see that from \eqref{inductionStep} it follows that 
    \[
        |\Lambda_{Q_0,\mu,\nu}(\vec f,\vec g)|\leq \mathcal{C}_{\Theta,d,n}\sum_{l=0}^k\sum_{S\in\mathcal{S}_l'}|S|\llangle\vec f\rrangle_{\avgL^{p_1}(3S)}\cdot\llangle\vec g\rrangle_{\avgL^{p_2}(3S)}
    \]
    provided $k\geq0$ is large enough. Thus we obtain \eqref{mainEst} with the sparse collection 
    \[
        \mathcal{S}'\coloneqq \bigcup_{l=0}^k\{3Q\setcolon Q\in\mathcal{S}_l'\}.
    \]
    Indeed, for any $3Q\in \mathcal{S}'$ the pairwise disjoint subsets
    \[
        E(Q)\coloneqq Q\setminus \bigcup_{R\in \mathcal{S}'(Q)}R
    \]
    satisfy $|E(Q)|\geq (1-2^{-\vartheta d})|Q|=3^{-d}(1-2^{-\vartheta d})|3Q|$.

    To get rid of the dependence on $\mu,\nu$ we note that \eqref{mainEst} implies that
    \[
        \sup_{\mu<\nu}|\Lambda_\mu^\nu(\vec f, \vec g)|\leq \mathcal{C}_{\Theta,d,n}\sup_{\mathcal{S}}\sum_{Q\in\mathcal{S}}|Q|\llangle\vec f\rrangle_{\avgL^{p_1}(Q)}\cdot \llangle\vec g\rrangle_{\avgL^{p_2}(Q)},
    \]
    where the supremum on the right-hand side is taken over all sparse collections. This implies that there exists a sparse collection $\mathcal{S}$, depending only on the input functions and fixed parameters, such that 
    \[
        \sup_{\mu<\nu}|\Lambda_\mu^\nu(\vec f, \vec g)|\leq 2\mathcal{C}_{\Theta,d,n}\sum_{Q\in\mathcal{S}}|Q|\llangle\vec f\rrangle_{\avgL^{p_1}(Q)}\cdot \llangle\vec g\rrangle_{\avgL^{p_2}(Q)},
    \]
    and the proof is done.
\end{proof}
Similarly as in the scalar case we are able to extend the result of Theorem \ref{main} to the full sesquilinear form $\Lambda\colon L^t(\R^d)\times L^{t'}(\R^d)\to\C$. The strategy of the argument is the same as in the scalar case \cite{10.2140/apde.2017.10.1255}.
Due to the limiting argument from \cite[Chapter I Paragraph 7.2]{stein1993harmonic} we have \begin{equation}\label{Steinlimitarg} \Lambda(f,g)=\langle mf,g\rangle+\lim_{\nu\to\infty}\Lambda^\nu_{-\nu}(f,g)\end{equation}
for some $m\in L^\infty(\R^d)$, whenever $f\in L^r(\R^d)$ and $g\in L^{r'}(\R^d)$. This identity easily extends to the vector valued setting. Indeed, let $\{\vec e_i\}_{i=1}^n$ be an orthonormal basis of $\C^n$ and calculate
\begin{align*}
    \Lambda(\vec f,\vec g)=\sum_{i=1}^n\Lambda(\vec f\cdot e_i,\vec g\cdot \vec e_i)
    &=\sum_{i=1}^n\left(\langle m\vec f\cdot \vec e_i,\vec g\cdot \vec e_i\rangle+\lim_{\nu\to\infty}\Lambda^\nu_{-\nu}(\vec f\cdot \vec e_i,\vec g\cdot \vec e_i)\right)\\
    &= \langle m\vec f,\vec g\rangle+\lim_{\nu\to\infty}\Lambda^\nu_{-\nu}(\vec f,\vec g).
\end{align*}
The operator $Tf\coloneqq mf$ is clearly weak type $(1,1)$ and the bi-sublinear maximal operator 
\[M_T(f,g)\coloneqq \sup_{Q\ni x}\fint_Q |T(\mathbbm 1_{(3Q)^\complement}f)||g|\numberthis\label{bisublinmaxop}\]
vanishes, and hence by \cite[Theorem 11]{muller_quantitative_2022} there exists a sparse family $\mathcal S$ of cubes such that
\[
    \langle m\vec f,\vec g\rangle\leq c_{n,d}\|m\|_{\infty}\sum_{Q\in\mathcal{S}}|Q|\llangle \vec f\rrangle_{\avgL^1(Q)}\cdot\llangle\vec g\rrangle_{\avgL^1(Q)}.
\]
These considerations show that with the assumptions of Theorem \ref{main}, for all $\vec f\in L^\infty(\R^d,\C^n)$ and $\vec g\in L^\infty(\R^d,\C^n)$ with compact support, there exists a sparse collection $\mathcal{S}$ of cubes such that
    \[
        |\Lambda(\vec f, \vec g)|\leq \mathcal{C}\sum_{Q\in\mathcal{S}}|Q|\llangle\vec f\rrangle_{\avgL^{p_1}(Q)}\cdot \llangle\vec g\rrangle_{\avgL^{p_2}(Q)},
    \]
    where $\mathcal{C}\coloneqq c_{d,n}\|m\|_{\infty}+\mathcal{C}_{\Theta,d,n}$. To extend this bound to all $\vec f\in L^{p_1}(\R^d,\C^n)$ and $\vec g\in L^{p_2}(\R^d,\C^n)$, we need the following simple lemma.
\begin{lem}\label{convBodyInc}
    Let $S_1,S_2$ be subsets of $\R^d$ and $1\leq p\leq\infty$. Then we have
    \[\llangle \mathbbm 1_{S_1}\vec f\rrangle_{\avgL^p(S_2)}\subset \llangle \vec f\rrangle_{\avgL^p(S_2)}.\]
\end{lem}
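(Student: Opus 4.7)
The plan is to prove this by a direct unpacking of the definition of $\llangle \cdot \rrangle_{\avgL^p(S_2)}$. Given any element of $\llangle \mathbbm 1_{S_1}\vec f\rrangle_{\avgL^p(S_2)}$, it has the form
\[
    \Bigl(|S_2|^{-1}\langle \mathbbm 1_{S_2}\mathbbm 1_{S_1}f_i,\varphi\rangle\Bigr)_{i=1}^n
    =\Bigl(|S_2|^{-1}\langle \mathbbm 1_{S_2}f_i,\mathbbm 1_{S_1}\varphi\rangle\Bigr)_{i=1}^n
\]
for some $\varphi\in \bar B_{\avgL^{p'}(S_2)}$. The idea is to use $\psi\coloneqq \mathbbm 1_{S_1}\varphi$ as a new test function for $\vec f$; to show the inclusion it then suffices to verify that $\psi\in \bar B_{\avgL^{p'}(S_2)}$ as well.

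The second step is exactly that norm check. For $1<p<\infty$ we have
\[
    \|\mathbbm 1_{S_1}\varphi\|_{\avgL^{p'}(S_2)}
    =\Bigl(|S_2|^{-1}\int_{S_2\cap S_1}|\varphi|^{p'}\Bigr)^{1/p'}
    \leq \|\varphi\|_{\avgL^{p'}(S_2)}\leq 1,
\]
and the endpoint cases $p=1$ and $p=\infty$ follow from the trivial pointwise bound $|\mathbbm 1_{S_1}\varphi|\leq |\varphi|$, which dominates the $\avgL^\infty$- and $\avgL^1$-averages respectively. Hence $\psi\in\bar B_{\avgL^{p'}(S_2)}$ and the chosen element of $\llangle \mathbbm 1_{S_1}\vec f\rrangle_{\avgL^p(S_2)}$ is witnessed as an element of $\llangle \vec f\rrangle_{\avgL^p(S_2)}$.

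There is no real obstacle here; the statement is essentially the observation that multiplication by an indicator is a contraction on $\avgL^{p'}(S_2)$ combined with the transfer of that multiplication from $\vec f$ to the dual test function. The only point to be careful about is the handling of the endpoint $p\in\{1,\infty\}$, which is why I would separate the two cases in the norm estimate as above.
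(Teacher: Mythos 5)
Your proof is correct and follows essentially the same route as the paper: absorb the indicator $\mathbbm 1_{S_1}$ into the dual test function and observe that this does not increase its $\avgL^{p'}(S_2)$ norm. The paper states the norm inequality $\|\mathbbm 1_{S_1}\phi\|_{\avgL^{p'}(S_2)}\leq\|\phi\|_{\avgL^{p'}(S_2)}$ without splitting into cases (it holds for all $p'$ by monotonicity of the $\avgL^{p'}$ norm under $|\mathbbm 1_{S_1}\phi|\leq|\phi|$), so your case distinction is unnecessary but harmless.
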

\begin{proof}
    Suppose $a\in \llangle \mathbbm 1_{S_1}\vec f\rrangle_{\avgL^p(S_2)}$. Then for some $\phi\in \bar B_{\avgL^{p'}(S_2)}$ we have
    \[
        a=\fint_{S_2}\mathbbm 1_{S_1}\vec f \,\bar \phi .
    \]
    So by defining $\psi\coloneqq \mathbbm 1_{S_1}\phi$ we get
    \[
        \|\psi\|_{\avgL^{p'}( S_2)}\leq\|\phi\|_{\avgL^{p'}(S_2)}\leq 1.
    \]
    Thus we have
    \[
        a=\fint_{S_2}\vec f \,\bar\psi\in\llangle \vec f\rrangle_{\avgL^{p}(S_2)}.
    \]
\end{proof}
Now are ready to show the following result. 
\begin{cor}\label{actualMain}
    Let $p_1,p_2,\Theta$ and $\Lambda$ be as in Theorem \ref{main}. Additionally, we assume that $\Lambda$ extends boundedly to $L^t(\R^d)\times L^{t'}(\R^d)$ for some $1<t<\infty$. Then for all $\vec f\in L^{t}(\R^d,\C^n)$ and $\vec g\in L^{t'}(\R^d,\C^n)$, there exists a sparse collection $\mathcal{S}$ of cubes such that
    \[
        |\Lambda(\vec f, \vec g)|\leq \mathcal{C}\sum_{Q\in\mathcal{S}}|Q|\llangle\vec f\rrangle_{\avgL^{p_1}(Q)}\cdot \llangle\vec g\rrangle_{\avgL^{p_2}(Q)},
    \]
    where $\mathcal{C}\coloneqq c_{d,n}\|m\|_{\infty}+\mathcal{C}_{\Theta,d,n}$ with $m\in L^\infty$ defined by \eqref{Steinlimitarg}, and $\mathcal{C}_{\Theta,d,n}$ is defined by \eqref{sparseConst}.
\end{cor}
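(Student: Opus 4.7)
The plan is to reduce to the case, already handled in the paragraph just before this corollary, where both input functions are bounded and compactly supported, by truncation and $L^t\times L^{t'}$-continuity, and then to extract an explicit sparse collection from the resulting supremum.

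First I would truncate $\vec f$ and $\vec g$ simultaneously in space and in height: for $k\in\N$ set $A_k:=B(0,k)\cap\{|\vec f|\leq k\}$, $B_k:=B(0,k)\cap\{|\vec g|\leq k\}$, and define $\vec f_k:=\mathbbm 1_{A_k}\vec f$, $\vec g_k:=\mathbbm 1_{B_k}\vec g$. These are bounded with compact support, so the already-established bound furnishes a sparse collection $\mathcal{S}_k$ satisfying
\[
   |\Lambda(\vec f_k,\vec g_k)|\leq \mathcal{C}\sum_{Q\in \mathcal{S}_k}|Q|\llangle \vec f_k\rrangle_{\avgL^{p_1}(Q)}\cdot\llangle \vec g_k\rrangle_{\avgL^{p_2}(Q)}.
\]
Lemma \ref{convBodyInc}, applied with $S_1=A_k$ and then $S_1=B_k$, gives $\llangle \vec f_k\rrangle_{\avgL^{p_1}(Q)}\subset \llangle \vec f\rrangle_{\avgL^{p_1}(Q)}$ and analogously for $\vec g$; since set inclusion of complex-symmetric convex bodies is monotone under the Minkowski dot product (and hence under the identification with the radius of the resulting disk), I can replace $\vec f_k,\vec g_k$ by $\vec f,\vec g$ on the right-hand side. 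Denoting $\Xi:=\sup_{\mathcal{S}}\sum_{Q\in\mathcal{S}}|Q|\llangle \vec f\rrangle_{\avgL^{p_1}(Q)}\cdot\llangle \vec g\rrangle_{\avgL^{p_2}(Q)}$, with the supremum over all sparse collections, this yields $|\Lambda(\vec f_k,\vec g_k)|\leq \mathcal{C}\,\Xi$ uniformly in $k$.

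Next I would pass to the limit: by dominated convergence $\vec f_k\to\vec f$ in $L^t$ and $\vec g_k\to\vec g$ in $L^{t'}$, and the hypothesized $L^t\times L^{t'}$-boundedness of $\Lambda$ (extended componentwise to $\C^n$-valued inputs) forces $\Lambda(\vec f_k,\vec g_k)\to \Lambda(\vec f,\vec g)$. Hence $|\Lambda(\vec f,\vec g)|\leq \mathcal{C}\,\Xi$. Since the left-hand side is finite, one can pick a sparse $\mathcal{S}$ whose sparse sum exceeds $\Xi/2$ (or, in the degenerate case $\Xi=\infty$, exceeds $|\Lambda(\vec f,\vec g)|/\mathcal{C}$) and absorb the factor $2$ into $\mathcal{C}$.

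The main obstacle is really just arranging the truncation so that it simultaneously produces $L^\infty$ inputs with compact support (matching the setting of the base case) and admits Lemma \ref{convBodyInc}, which is what forces the space–height truncation with indicator functions. The $L^t\times L^{t'}$-continuity of $\Lambda$ and the extraction of an explicit sparse family from a supremum are then standard.
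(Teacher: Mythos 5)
Your proposal is correct and follows essentially the same route as the paper: truncate in space and height, invoke the already-established compactly supported bounded case, remove the truncation via Lemma~\ref{convBodyInc}, pass to the limit using $L^t\times L^{t'}$-boundedness and dominated convergence, and extract a near-optimal sparse collection from the supremum. The only cosmetic difference is that the paper truncates $\vec f$ and $\vec g$ with the single set $E_\ell=\{x\in Q_\ell: \max(|\vec f(x)|,|\vec g(x)|)\le\ell\}$ rather than with two separate sets $A_k$, $B_k$, which is immaterial.
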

\begin{proof}
    Let $\{Q_\ell\}_{\ell=1}^\infty$ be an increasing sequence of cubes such that $\cup_{\ell=1}^\infty Q_\ell =\R^d$, and define
    \[E_\ell\coloneqq\{x\in Q_\ell\setcolon \max\{|\vec f(x)|,|\vec g(x)|\}\leq \ell\}.\] Since $\Lambda$ extends boundedly to $L^t(\R^d)\times L^{t'}(\R^d)$, we may apply the dominated convergence theorem to get
    \begin{align*}
        |\Lambda(\vec f, \vec g)|&=\lim_{\ell\to\infty}|\Lambda(\mathbbm 1_{E_\ell}\vec f, \mathbbm 1_{E_\ell}\vec g)|.
    \end{align*}
    Then the convex body domination for compactly supported bounded functions implies that there exists a sparse collection $\mathcal{S}_\ell$ such that
    \begin{align*}
        |\Lambda(\mathbbm 1_{E_\ell}\vec f, \mathbbm 1_{E_\ell}\vec g)|
        \leq\mathcal{C}\sum_{Q\in\mathcal{S}_\ell}|Q|\llangle\mathbbm 1_{E_\ell}\vec f\rrangle_{\avgL^{p_1}(Q)}\cdot \llangle\mathbbm 1_{E_\ell}\vec g\rrangle_{\avgL^{p_2}(Q)}
    \end{align*}
    Due to Lemma \ref{convBodyInc} we have
    \[
        \llangle\mathbbm 1_{E_\ell}\vec f\rrangle_{\avgL^{p_1}(Q)}\cdot \llangle\mathbbm 1_{E_\ell}\vec g\rrangle_{\avgL^{p_2}(Q)}\leq \llangle\vec f\rrangle_{\avgL^{p_1}(Q)}\cdot \llangle\vec g\rrangle_{\avgL^{p_2}(Q)}.
    \]
    Thus it follows that
    \[|\Lambda(\vec f, \vec g)|\leq \mathcal{C}\,\sup_{\mathcal{S}}\sum_{Q\in\mathcal{S}}|Q|\llangle\vec f\rrangle_{\avgL^{p_1}(Q)}\cdot \llangle\vec g\rrangle_{\avgL^{p_2}(Q)}, \]
    which implies the wanted inequality by the same argument as in the end of the proof of Theorem \ref{main}.
\end{proof}

\section{Rough homogeneous singular integrals}
The main motivation for the abstract convex body domination principle from the last section is provided by rough singular integrals. In this section we will study the following rough homogeneous singular integrals. Let $1< q\leq \infty$. For $\Omega\in L^q(\Sphere^{d-1})$ with vanishing integral, we define
\[
    T_\Omega f(x)\coloneqq \lim_{\varepsilon\to0}\int_{|y|>\varepsilon}f(x-y)\Omega\left(\frac{y}{|y|}\right)\frac{\intD y}{|y|^d}.
\]
Our goal is to apply Theorem \ref{main} (in the form of Corollary \ref{actualMain}) to the sesquilinear form $\Lambda$ related to $T_\Omega$. Notice that the assumptions of Theorem \ref{main} are scalar valued so we may directly use the results proven in \cite{10.2140/apde.2017.10.1255} to get convex body domination for $T_\Omega$. We will include some details for clarity.

Let $\mathcal{Q}$ be a stopping collection with top $Q$. We see that if $f\in L^r(\R^d)$ and $g\in L^{r'}(\R^d)$ are compactly and disjointly supported, then the sesquilinear form $\Lambda(f,g)\coloneqq \langle T_\Omega f,g\rangle$ satisfies 
\[
    \Lambda(f,g)=\int_{\R^d}\int_{\R^d}K(x,y)f(y)\intD y\, \bar g(x)\intD x,
\]
where $K(x,y)\coloneqq \Omega\left(\frac{x-y}{|x-y|}\right)\left/\,|x-y|^d\right.$. Away from the diagonal $x=y$, the kernel $K$ admits the decomposition
\[K=\sum_sK_s,\qquad K_s(x,y)\coloneqq \Omega\left(\frac{x-y}{|x-y|}\right)2^{-sd}\phi(2^{-s}(x-y)),\numberthis\label{roughKernel}\]
 where $\phi$ is a suitable radial smooth function supported in $A_0=\{2^{-2}\leq|x|\leq1\}$. We will also need the following quasi-norm
 \[
    \|\Omega\|_{L^{q,1}\log L(\Sphere^{d-1})}\coloneqq \inf\left\{\lambda>0\setcolon \left[\Omega/\lambda\right]_{L^{q,1}\log L(\Sphere^{d-1})}\leq 1\right\}\numberthis\label{LOnormnew},
 \]
 where
 \[
    [\Omega]_{L^{q,1}\log L(\Sphere^{d-1})}\coloneqq q\int_0^\infty \log(e+t)|\{\theta\in\Sphere^{d-1}\setcolon |\Omega(\theta)|>t\}|^\frac{1}{q}\intD t\numberthis\label{LOnormold}.
 \]
Note that in \cite{10.2140/apde.2017.10.1255} the quantity \eqref{LOnormold} is denoted by $\|\Omega\|_{L^{q,1}\log L(\Sphere^{d-1})}$. We give this notation to \eqref{LOnormnew} on the account of it being homogeneous while \eqref{LOnormold} is not. 

The following result, which follows from \cite[Proposition 4.1]{10.2140/apde.2017.10.1255}, is key in verifying that the kernel $K$ satisfies the assumptions of Theorem \ref{main}.
 \begin{prop}\label{homL}
    Let $\Omega\in L^q(\Sphere^{d-1})$ with zero average and $b\in\dot {\mathcal{X}}_1(\mathcal{Q})$.
     Then there exists an absolute constant $C$ such that
     \[
        |\Lambda_{\mathcal{Q},\nu,\mu}(b,h)|\leq Cp' \tilde N_{p,q}(\Omega)\,|Q|\,\|b\|_{\dot{\mathcal{X}}_1(\mathcal{Q})}\|h\|_{\mathcal{Y}_p(\mathcal{Q})},\numberthis\label{homLineq}
     \]
     where 
     \[
        \tilde N_{p,q}(\Omega)\coloneqq \begin{cases}
            1+[\Omega]_{L^{q,1}\log L(\Sphere^{d-1})},\, &q<\infty,\, q'\leq p<\infty,\\\|\Omega\|_{L^\infty(\Sphere^{d-1})},&q=\infty,\, 1<p<\infty.
        \end{cases}
     \]
 \end{prop}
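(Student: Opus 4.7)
\begin{sketchproof}
The statement is essentially a direct translation of \cite[Proposition 4.1]{10.2140/apde.2017.10.1255} from their bilinear convention to our sesquilinear one, so my plan is to make this reduction explicit and then outline the key technical estimate that powers it.

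The passage from bilinear to sesquilinear is purely notational. Writing $\Lambda^{\mathrm{bi}}(b,h)\coloneqq\Lambda(b,\bar h)$ for the bilinear companion of $\Lambda$, one has $\Lambda_{\mathcal Q,\mu,\nu}(b,h)=\Lambda^{\mathrm{bi}}_{\mathcal Q,\mu,\nu}(b,\bar h)$, together with $\|\bar h\|_{\mathcal Y_p(\mathcal Q)}=\|h\|_{\mathcal Y_p(\mathcal Q)}$. Applying the bilinear bound from \cite[Proposition 4.1]{10.2140/apde.2017.10.1255} to the pair $(b,\bar h)$ therefore yields \eqref{homLineq} immediately, without any further change in constants. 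Structurally, this is all that is needed.

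The real content — which I would cite rather than reprove — lives behind the bilinear statement. Using representation \eqref{rep1} from Lemma \ref{representations}, one writes
\[
\Lambda_{\mathcal Q,\mu,\nu}(b,h)=\sum_{j\geq 1}\sum_{\mu<s\leq\min\{s_Q,\nu\}}\int K_s(x,y)\,b_{s-j}(y)\,\bar h(x)\intD y\intD x,
\]
where $b_{s-j}$ is a disjoint sum of mean-zero bumps on cubes of dyadic scale $2^{s-j}$, strictly smaller than the kernel annulus scale $2^s$ by the factor $2^{-j}$. Exploiting $\int_L b_L=0$, one replaces $K_s(x,y)$ by $K_s(x,y)-K_s(x,c_L)$ for $y\in L$ with center $c_L$, and then pairs with $h$ on the $x$-side via H\"older at exponents $(p,p')$; this is the step that produces the $p'$ prefactor. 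The separation property of $\mathcal Q$ and the definition of $\mathcal Y_p(\mathcal Q)$ absorb the $x$-integral into $\inf_{\widehat L}M_ph\leq\|h\|_{\mathcal Y_p(\mathcal Q)}$, while summing the $L^1$-masses $\|b_L\|_1$ in $L$ yields $|Q|\|b\|_{\dot{\mathcal X}_1(\mathcal Q)}$.

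The main obstacle — and the reason one must appeal to \cite{10.2140/apde.2017.10.1255} rather than to classical Calder\'on--Zygmund smoothness — is a quantitative $L^p$ modulus-of-continuity bound for the rough kernel $K_s$, summable in $j\geq 1$ with sharp constant $\tilde N_{p,q}(\Omega)$. This is proved there through a spherical Littlewood--Paley decomposition of $\Omega$ combined with a layer-cake argument calibrated to the $L^{q,1}\log L$ quasi-norm (and trivially by Young's inequality in the $q=\infty$ case). Summing the geometric series in $j$ concludes \eqref{homLineq}.
\end{sketchproof}
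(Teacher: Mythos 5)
Your reduction via $\Lambda^{\mathrm{bi}}(b,h)\coloneqq\Lambda(b,\bar h)$ together with $\|\bar h\|_{\mathcal Y_p(\mathcal Q)}=\|h\|_{\mathcal Y_p(\mathcal Q)}$ matches the paper's own route: the paper likewise defers to \cite[Proposition~4.1]{10.2140/apde.2017.10.1255} and declares the remaining proof identical. Two small points where your phrasing is looser than the paper is careful to be. First, \cite[Proposition~4.1]{10.2140/apde.2017.10.1255} does not directly bound $|\Lambda^{\mathrm{bi}}_{\mathcal Q,\mu,\nu}(b,\bar h)|$ --- its left-hand side is written in a different form, and one must pass through representation \eqref{rep1} to obtain the stated inequality (as the reference itself explains two paragraphs after their Proposition~4.1). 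You invoke \eqref{rep1} later in your content sketch, so the substance is present, but the earlier claim that the bilinear bound ``yields \eqref{homLineq} immediately, without any further change in constants'' elides this bridge. Second, in \cite{10.2140/apde.2017.10.1255} the function $\Omega$ is normalized to unit $L^q$ norm, which is not assumed here; the paper flags this as the reason the $\Omega$-dependent factor takes the exact form $\tilde N_{p,q}(\Omega)$, tracing it to the calculation below \cite[Lemma~4.3]{10.2140/apde.2017.10.1255}. Neither of these is a real gap, but both deserve to be named to make the reduction airtight, and your ``no further change in constants'' is a little too casual about the second one. As a minor aside, describing the mechanism behind the reference's estimate as a ``spherical Littlewood--Paley decomposition of $\Omega$'' is not quite what they do; their argument is a layer-cake truncation of $\Omega$ calibrated to the $L^{q,1}\log L$ quasi-norm, combined with Calder\'on--Zygmund-type smoothness for the mollified kernels, so it is related in spirit to Seeger's angular decomposition but not literally the same device.
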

 The above result is very close to \cite[Proposition 4.1]{10.2140/apde.2017.10.1255}. The left-hand side of the inequality \eqref{homLineq} is written in a different way, but in \cite{10.2140/apde.2017.10.1255} they explain on the second paragraph after Proposition 4.1 how one can estimate $|\Lambda_{\mathcal{Q},\nu,\mu}(b,h)|$ with their Proposition 4.1 and representation \eqref{rep1}. Another difference is that in \cite[Proposition 4.1]{10.2140/apde.2017.10.1255} the function $\Omega$ has unit norm in $L^q(\Sphere^{d-1})$, which we do not assume here. This explains the different term depending on $\Omega$, which arises from the calculation below \cite[Lemma 4.3]{10.2140/apde.2017.10.1255}. With these changes the proof of Proposition \ref{homL} is identical to the proof of \cite[Proposition 4.1]{10.2140/apde.2017.10.1255}.

 We will now use Proposition \ref{homL} to show that the sesquilinear form $\Lambda(f,g)=\langle T_\Omega f,g\rangle$ satisfies the assumptions of Theorem \ref{main}.
 
\begin{lem}\label{condCheck}
    Let $\Omega$ be as in Proposition \ref{homL}. The kernel \eqref{roughKernel} satisfies conditions \eqref{supportCond}, \eqref{KernelCond} with $[K]_q\leq [\Omega]_{L^{q,1}\log L(\Sphere^{d-1})}$, \eqref{bddCond} for $r=2$ with $C_T(2)\leq C_d[\Omega]_{L^{q,1}\log L(\Sphere^{d-1})}$ and \eqref{localBddCond1}, \eqref{localBddCond2} for $(p_1,p_2)=(1,p)$, where the range of $p$ depends on $q$ in the same way as in Proposition \ref{homL}, with constant $C_L=C_dp' \tilde N_{p,q}(\Omega)$.
    
\end{lem}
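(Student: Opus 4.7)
The plan is to verify the five conditions in sequence. Conditions \eqref{supportCond}, \eqref{KernelCond} and \eqref{bddCond} are short, and \eqref{localBddCond1} falls out directly from Proposition \ref{homL}; the bulk of the work is therefore concentrated in \eqref{localBddCond2}.

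For \eqref{supportCond} the inclusion $\supp\phi\subset A_0$ forces $x-y\in A_s$ whenever $K_s(x,y)\neq 0$. For \eqref{KernelCond} I would change variables $u=2^{-s}(x-y)$ to rewrite
\[
\|K_s(x,\cdot)\|_q^q=2^{-sd(q-1)}\int|\Omega(u/|u|)|^q|\phi(u)|^q\intD u\lesssim_d 2^{-sd(q-1)}\|\Omega\|_{L^q(\Sphere^{d-1})}^q,
\]
using that $\phi$ is radial and supported in $A_0$ to separate $\Omega$ from $\phi$ via polar coordinates. Multiplying by $2^{sd/q'}$ and comparing $\|\Omega\|_{L^q}$ to $[\Omega]_{L^{q,1}\log L}$ through their distribution-function expressions yields $[K]_q\lesssim_d[\Omega]_{L^{q,1}\log L(\Sphere^{d-1})}$; the bound on $\|K_s(\cdot,x)\|_q$ is symmetric. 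For \eqref{bddCond} I would invoke the classical result that the truncations of $T_\Omega$ are uniformly bounded on $L^2$ with operator norm controlled by the $L\log L$ size of $\Omega$, itself dominated by $[\Omega]_{L^{q,1}\log L(\Sphere^{d-1})}$.

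Condition \eqref{localBddCond1} with $(p_1,p_2)=(1,p)$ is exactly Proposition \ref{homL} and needs nothing further. For \eqref{localBddCond2} I would use representation \eqref{rep2} of Lemma \ref{representations}: the support of the kernel first lets me reduce to $\supp h\subset Q$, after which \eqref{rep2} expresses $\Lambda_{\mathcal{Q},\mu,\nu}(h,b)$ as $\overline{\Lambda^*_{\mathcal{Q},\mu,\nu}(b^{\text{in}},h)}+V_\mathcal{Q}(h,b)$. The adjoint form $\Lambda^*$ is itself the rough singular integral form with symbol $\Omega^*(\theta)\coloneqq \overline{\Omega(-\theta)}$, which has the same vanishing integral and the same $L^{q,1}\log L$ norm as $\Omega$, so Proposition \ref{homL} applies to $\Lambda^*$ without change. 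Combining the resulting estimate with $\|b^{\text{in}}\|_{\dot{\mathcal{X}}_1}\leq\|b^{\text{in}}\|_{\dot{\mathcal{X}}_p}\leq\|b\|_{\dot{\mathcal{X}}_p}$ (from Hölder $M_1\leq M_p$ and the defining property of $b^{\text{in}}$) and $\|h\|_{\mathcal{Y}_p}\leq\|h\|_{\mathcal{Y}_\infty}$ controls the main term. The remainder is handled by \eqref{rep2rem}, the bound on $[K]_q$ already established, and the monotonicities $\|\cdot\|_{\mathcal{X}_{q'}}\leq\|\cdot\|_{\mathcal{X}_p}$, $\|\cdot\|_{\mathcal{Y}_{q'}}\leq\|\cdot\|_{\mathcal{Y}_\infty}$, valid thanks to $q'\leq p$.

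The main subtlety I expect is in \eqref{localBddCond2}: Proposition \ref{homL} requires the first slot of $\Lambda^*$ to have vanishing integral on each $L\in\mathcal{Q}$, so one must confirm that the truncation $b^{\text{in}}$ inherits this cancellation from $b\in\dot{\mathcal{X}}_p(\mathcal{Q})$, or else peel off a small non-cancellative piece and absorb it into the $V_\mathcal{Q}$ remainder. Once this cancellation point is settled, the rest is bookkeeping of the norm inequalities described above.
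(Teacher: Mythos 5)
Your proposal is correct and follows essentially the same route as the paper: \eqref{supportCond}, \eqref{KernelCond} and \eqref{bddCond} by direct computation and the classical $L^2$ bound (the paper cites Calder\'on's theorem), \eqref{localBddCond1} directly from Proposition~\ref{homL}, and \eqref{localBddCond2} via the representation \eqref{rep2}, applying Proposition~\ref{homL} to $\Lambda^*$ with $\Omega^*$, then using $\|b^{\text{in}}\|_{\dot{\mathcal X}_1}\leq\|b^{\text{in}}\|_{\dot{\mathcal X}_p}\leq\|b\|_{\dot{\mathcal X}_p}$, $\|h\|_{\mathcal Y_p}\leq\|h\|_{\mathcal Y_\infty}$, $q'\leq p$ and \eqref{KboundRSIO}. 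Your cautionary remark that $b^{\text{in}}$ must inherit the mean-zero property on each $L\in\mathcal Q$ is a genuine point: the paper silently relies on the construction of $b^{\text{in}}$ from the appendix of \cite{10.2140/apde.2017.10.1255} (where this is built in), so you are right to flag it, but it does hold and no additional peeling-off step is needed.
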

\begin{proof}
    The assumptions $\eqref{supportCond}$, $\eqref{KernelCond}$ and $\eqref{bddCond}$ are quick to check for $q\in\left]1,\infty\right]$ and $r=2$. Indeed, the support condition \eqref{supportCond} follows from the support of $\phi$, and  \[[K]_q\leq \|\Omega\|_{L^q(\Sphere^{d-1})}\leq [\Omega]_{L^{q,1}\log L(\Sphere^{d-1})}\numberthis\label{KboundRSIO}\] follows immediately from definitions, so \eqref{KernelCond} is true. The boundedness condition \eqref{bddCond} for $r=2$  states that the truncations of $T_\Omega$ are $L^2$-bounded, which follows from a fundamental result \cite[Theorem 1]{Calderon1956Singular}.

 It remains to verify \eqref{localBddCond1} and \eqref{localBddCond2} with $p_1=1$ and $p_2=p$, where the range of $p$ depends on $q$ in the same way as in Proposition \ref{homL}. If $b\in\dot{\mathcal{X}}_1$, then Proposition \ref{homL} gives us 
 \[
    |\Lambda_{\mathcal{Q},\nu,\mu}(b,h)|\leq Cp'\tilde N_{p,q}(\Omega)\|b\|_{\dot{\mathcal{X}}_1(\mathcal{Q})}\|h\|_{\mathcal{Y}_p(\mathcal{Q})},
 \]
 which is \eqref{localBddCond1} with $C_L\sim p'\tilde N_{p,q}(\Omega)$. 
 
 Assume then $h\in \mathcal{Y}_{q'}$ and $b\in \dot{\mathcal{X}}_{q'}$.
 Recall that $\Lambda^*$ is like $\Lambda$, but with $\Omega^*$ in place of $\Omega$, where $\Omega^*(x)=\overline{\Omega(-x)}$. By the representation \eqref{rep2}--\eqref{rep2rem} we have 
 \[
    \Lambda_{\mathcal{Q},\nu,\mu}(h,b) =\overline{\Lambda^*_{\mathcal{Q},\nu,\mu}(b^\text{in},\mathbbm 1_Q h)}+V_\mathcal{Q}(h,b),
 \] 
where 
\[
    |V_\mathcal{Q}(h,b)|\leq 2^{\vartheta d}[K]_q|Q|\|h\|_{\mathcal{Y}_{q'}(\mathcal{Q})}\|b\|_{\dot{\mathcal{X}}_{q'}(\mathcal{Q})}
\]
for a suitable absolute constant $\vartheta$.
Now Proposition \ref{homL} with $\Omega^*\in L^q(\Sphere^{d-1})$ in place of $\Omega$ (and thus $\Lambda^*$ in place of $\Lambda$) yields
\begin{align*}
    |\Lambda_{\mathcal{Q},\nu,\mu}(h,b)|&\leq|\Lambda^*_{\mathcal{Q},\nu,\mu}(b^\text{in},\mathbbm 1_Q h)|+|V_\mathcal{Q}(h,b)|\\&\leq Cp'\tilde N_{p,q}(\Omega)\,|Q|\,\|b^{\text{in}}\|_{\dot{\mathcal{X}}_p(\mathcal{Q})}\|h\|_{\mathcal{Y}_\infty(\mathcal{Q})}\\&\qquad\qquad\qquad\qquad\qquad\qquad+2^{\vartheta d}[K]_q\,|Q|\,\|h\|_{\mathcal{Y}_{q'}(\mathcal{Q})}\|b\|_{\dot{\mathcal{X}}_{q'}(\mathcal{Q})}.
 \end{align*} 
 Then we apply $\|b^\text{in}\|_{\dot{\mathcal{X}}_p(\mathcal{Q})}\leq \|b\|_{\dot{\mathcal{X}}_p(\mathcal{Q})}$, $q'\leq p$ and \eqref{KboundRSIO} to bound the last expression by
 \begin{align*}
        (Cp'\tilde N_{p,q}(\Omega)+2^{\vartheta d}[K]_q)\,|Q|\,\|h\|_{\mathcal{Y}_\infty(\mathcal{Q})}\|b\|_{\dot{\mathcal{X}}_p(\mathcal{Q})}\lesssim_d p'\tilde N_{p,q}(\Omega)\,|Q|\,\|h\|_{\mathcal{Y}_\infty(\mathcal{Q})}\|b\|_{\dot{\mathcal{X}}_p(\mathcal{Q})}.
 \end{align*}
 Thus \eqref{localBddCond2} holds for $(p_1,p_2)=(1,p)$ with constant $C_L\sim_dp'\tilde N_{p,q}(\Omega)$.
\end{proof} 

We now see that the assumptions of Corollary \ref{actualMain} are satisfied so we get the following convex body extension of  \cite[Theorem A]{10.2140/apde.2017.10.1255}.
 \begin{thm}\label{homCBD}
     Let $\Omega\in L^1(\Sphere^{d-1})$ with zero average. Then for all $1<t<\infty$, $\vec f\in L^t(\R^d,\C^n)$ and $\vec g \in L^{t'}(\R^d,\C^n)$, there exists a sparse collection $\mathcal{S}$ of cubes such that 
     \[
            |\langle T_\Omega \vec f,\vec g\rangle|\lesssim_{d,n}p'N_{p,q}(\Omega)\sum_{Q\in\mathcal{S}}|Q|\llangle\vec f\rrangle_{\avgL^1(Q)}\cdot\llangle\vec g\rrangle_{\avgL^p(Q)},
     \]
     where 
     \[
        N_{p,q}(\Omega)\coloneqq \begin{cases}
            \|\Omega\|_{L^{q,1}\log L(\Sphere^{d-1})},\, &q<\infty,\, q'\leq p<\infty,\\\|\Omega\|_{L^\infty(\Sphere^{d-1})},&q=\infty,\, 1<p<\infty.
        \end{cases}
     \]
 \end{thm}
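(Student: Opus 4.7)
My plan is to reduce the statement to an application of Corollary \ref{actualMain} for the sesquilinear form $\Lambda(f,g) \coloneqq \langle T_\Omega f, g\rangle$, using Lemma \ref{condCheck} to verify all abstract hypotheses. The plan has four main steps: (i) check the hypotheses of Theorem \ref{main} via Lemma \ref{condCheck}; (ii) confirm the $L^t \times L^{t'}$ boundedness of $\Lambda$; (iii) argue that the Stein multiplier $m$ in \eqref{Steinlimitarg} vanishes; and (iv) rescale $\Omega$ to pass from the non-homogeneous $\tilde N_{p,q}$ to the homogeneous $N_{p,q}$.

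I would fix an admissible pair $(p,q)$ and note that step (i) is handed to us by Lemma \ref{condCheck}: the kernel \eqref{roughKernel} satisfies \eqref{supportCond}, \eqref{KernelCond}, \eqref{bddCond} at $r=2$ and the asymmetric local bounds \eqref{localBddCond1}, \eqref{localBddCond2} at $(p_1,p_2)=(1,p)$ with $C_L \lesssim_d p'\tilde N_{p,q}(\Omega)$. Since $C_T(2) \leq C_d[\Omega]_{L^{q,1}\log L}$ is dominated by the same quantity, the constant $\mathcal{C}_{\Theta,d,n}$ of \eqref{sparseConst} would be controlled by $p'\tilde N_{p,q}(\Omega)$ up to a factor depending only on $d$ and $n$. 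For step (ii), I would invoke the classical $L^t$-boundedness of $T_\Omega$ for $\Omega \in L^q$ with $q>1$ (which also covers $q=\infty$), available from \cite{Calderon1956Singular} and interpolation. For step (iii), the idea is that the principal value defining $T_\Omega$ coincides with the symmetric limit $\lim_{\nu\to\infty}\Lambda_{-\nu}^\nu(f,g)$ on a dense class of functions, which forces $m\equiv 0$ in \eqref{Steinlimitarg}.

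Once (i)--(iii) are in hand, Corollary \ref{actualMain} would immediately yield, for every $\vec f \in L^t(\R^d,\C^n)$ and $\vec g \in L^{t'}(\R^d,\C^n)$, a sparse collection $\mathcal{S}$ with
\[
    |\langle T_\Omega \vec f,\vec g\rangle|\lesssim_{d,n} p'\tilde N_{p,q}(\Omega)\sum_{Q\in\mathcal{S}}|Q|\,\llangle \vec f\rrangle_{\avgL^1(Q)}\cdot\llangle \vec g\rrangle_{\avgL^p(Q)}.
\]
Step (iv) is then routine: when $q=\infty$ we have $\tilde N_{p,q}=N_{p,q}=\|\Omega\|_\infty$; when $1<q<\infty$, both sides of the sparse inequality are homogeneous of degree one in $\Omega$ (using $T_{\lambda\Omega}=\lambda T_\Omega$), so the plan is to apply the bound to $\Omega/\|\Omega\|_{L^{q,1}\log L}$---which by \eqref{LOnormnew} satisfies $[\Omega/\|\Omega\|_{L^{q,1}\log L}]_{L^{q,1}\log L}\leq 1$ and hence has $\tilde N_{p,q}\leq 2$---and rescale back to produce the claimed factor $p'N_{p,q}(\Omega)$. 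The only genuinely conceptual point is verifying $m\equiv 0$ in step (iii); everything else amounts to bookkeeping from Lemma \ref{condCheck} and Corollary \ref{actualMain}.
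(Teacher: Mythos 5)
Your proposal is correct and follows essentially the same route as the paper: verify the hypotheses of Theorem \ref{main} via Lemma \ref{condCheck}, feed the result into Corollary \ref{actualMain}, and then rescale $\Omega$ to pass from the inhomogeneous $\tilde N_{p,q}$ to the homogeneous $N_{p,q}$. Steps (i), (ii) and (iv) match the paper's argument.

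The one genuine divergence is step (iii). You assert that the Stein multiplier $m$ of \eqref{Steinlimitarg} vanishes because the principal value definition of $T_\Omega$ agrees with the smooth-truncation limit. That claim is in fact true, but it is not free: it uses that $\phi$ is radial and that $\int_{\Sphere^{d-1}}\Omega = 0$, so that for smooth compactly supported $f$ the difference between the sharp and smooth truncations near the origin can be rewritten as an integral against $f(x-y)-f(x)$ and shown to vanish. As stated, your proposal hands this off as a dense-class remark without the cancellation argument. The paper sidesteps the issue entirely: it does not try to identify $m$, but simply bounds $\|m\|_{L^\infty}$ by writing $\langle mf,g\rangle = \Lambda(f,g) - \lim_\nu \Lambda_{-\nu}^\nu(f,g)$ for $\|f\|_{L^2}=\|g\|_{L^2}=1$ and invoking $\|\Lambda\|_{L^2\times L^2\to\C}+C_T(2)\lesssim_d \tilde N_{p,q}(\Omega)$. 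That route is more robust because it uses only quantities already controlled in Lemma \ref{condCheck} and would work unchanged if $m$ were nonzero. Your approach buys a slightly cleaner final constant (no additive $\|m\|_\infty$ term), at the cost of an extra cancellation argument that must actually be carried out. A second, very minor point: in step (iv) you plug in $\lambda = \|\Omega\|_{L^{q,1}\log L}$ directly, implicitly assuming the infimum in \eqref{LOnormnew} is attained, whereas the paper takes $\lambda > \|\Omega\|_{L^{q,1}\log L}$ and lets $\lambda$ tend to the norm; the paper's version is safer, though the difference is cosmetic here.
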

 \begin{proof}
     Lemma \ref{condCheck} gives that the assumptions of Corollary \ref{actualMain} are satisfied with constants bounded by $c_d\,p'\tilde N_{p,q}(\Omega)$, and thus Corollary \ref{actualMain} gives 
     \[
        |\langle T_\Omega \vec f,\vec g\rangle|\lesssim_{d,n}\left(\|m\|_{L^\infty(\R^d)}+p'\tilde N_{p,q}(\Omega)\right)\sum_{Q\in\mathcal{S}}|Q|\llangle\vec f\rrangle_{\avgL^1(Q)}\cdot\llangle\vec g\rrangle_{\avgL^p(Q)},
     \]
     where $m$ is defined by \eqref{Steinlimitarg}. Applying \eqref{Steinlimitarg} with $\|f\|_{L^2(\R^d)}=1=\|g\|_{L^2(\R^d)}$ we get
     \begin{align*}
            |\langle mf,g\rangle|&\leq  |\Lambda(f,g)|+|\lim_{\nu\to\infty}\Lambda^\nu_{-\nu}(f,g)|\\&\leq \|\Lambda\|_{L^{2}(\R^d)\times L^{2}(\R^d)\to\C}+C_{T}(2)\lesssim_d \tilde N_{p,q}(\Omega),
     \end{align*}
     and taking supremums over such $f$ and $g$ yields $\|m\|_{L^\infty(\R^d)}\lesssim_d \tilde N_{p,q}(\Omega)$. This shows that 
     \[
        |\langle T_\Omega \vec f,\vec g\rangle|\lesssim_{d,n}p'\tilde N_{p,q}(\Omega)\sum_{Q\in\mathcal{S}}|Q|\llangle\vec f\rrangle_{\avgL^1(Q)}\cdot\llangle\vec g\rrangle_{\avgL^p(Q)}.
     \]
     Suppose $q<\infty$ and $\lambda>\|\Omega\|_{L^{q,1}\log L(\Sphere^{d-1})}$. Then we have
     \begin{align*}
         |\langle T_\Omega \vec f,\vec g\rangle|&=\lambda |\langle T_{\Omega/\lambda} \vec f,\vec g\rangle|\\&\lesssim_{d,n} p'\lambda (1+[\Omega/\lambda]_{L^{q,1}\log L(\Sphere^{d-1})})\sum_{Q\in\mathcal{S}}|Q|\llangle\vec f\rrangle_{\avgL^1(Q)}\cdot\llangle\vec g\rrangle_{\avgL^p(Q)}\\&\leq 2p'\lambda \sum_{Q\in\mathcal{S}}|Q|\llangle\vec f\rrangle_{\avgL^1(Q)}\cdot\llangle\vec g\rrangle_{\avgL^p(Q)},
     \end{align*}
     and letting $\lambda\to \|\Omega\|_{L^{q,1}\log L(\Sphere^{d-1})}$ gives us the wanted result.
 \end{proof}
 We note that the above Theorem was known in the case $q=\infty$ due to \cite[Theorem 9]{muller_quantitative_2022}, but for other $q>1$ it is new. We also note that in \cite[Theorem 6.2]{AIF_2020__70_5_1871_0} an $(L^p,L^p)$ convex body bound for the maximal truncations
 \[
    T_{\Omega,\star}f(x)\coloneqq \sup_{\varepsilon>0}\left|\int_{|y|>\varepsilon}f(x-y)\Omega(\frac{y}{|y|})\frac{\intD y}{|y|^d}\right|\numberthis\label{maxTrunc}
 \]
  was achieved for the case $q=\infty$.
  
 Since the adjoint $T_\Omega^*$ is an operator of the same form as $T_\Omega$, we get $(L^p,L^1)$ bounds for $T_\Omega$ as a consequence of Theorem \ref{homCBD}.
 \begin{cor}\label{adjhomCBD}
     With the assumptions and notations of Theorem \ref{homCBD} we have
     \[
        |\langle T_\Omega \vec f,\vec g\rangle|\lesssim_{d,n}p'N_{p,q}(\Omega)\sum_{Q\in\mathcal{S}}|Q|\llangle\vec f\rrangle_{\avgL^p(Q)}\cdot\llangle\vec g\rrangle_{\avgL^1(Q)}.
     \]
 \end{cor}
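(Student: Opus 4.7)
The plan is a direct duality argument. Define the dual symbol $\Omega^*(\theta) \coloneqq \overline{\Omega(-\theta)}$; this function has zero average, and since $\theta \mapsto -\theta$ preserves surface measure on $\Sphere^{d-1}$ and complex conjugation preserves the distribution function of $|\Omega|$, the norms $\|\cdot\|_{L^\infty(\Sphere^{d-1})}$ and $\|\cdot\|_{L^{q,1}\log L(\Sphere^{d-1})}$ of $\Omega^*$ coincide with those of $\Omega$. In particular $N_{p,q}(\Omega^*) = N_{p,q}(\Omega)$. A direct kernel computation, exactly the one invoked for $\Lambda^*$ in the proof of Lemma \ref{condCheck}, identifies the adjoint of $T_\Omega$ with respect to the sesquilinear pairing $\langle\cdot,\cdot\rangle$ with $T_{\Omega^*}$.

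Expanding the $\C^n$-valued pairing via an orthonormal basis of $\C^n$ and applying scalar duality componentwise, I would obtain
\[
\langle T_\Omega \vec f, \vec g\rangle = \overline{\langle T_{\Omega^*} \vec g, \vec f\rangle},
\]
so in particular $|\langle T_\Omega \vec f, \vec g\rangle| = |\langle T_{\Omega^*}\vec g, \vec f\rangle|$. I would then apply Theorem \ref{homCBD} to the operator $T_{\Omega^*}$ with inputs $\vec g \in L^{t'}(\R^d,\C^n)$ and $\vec f \in L^t(\R^d,\C^n)$, that is, with the roles of the two arguments swapped, to produce a sparse collection $\mathcal{S}$ satisfying
\[
|\langle T_{\Omega^*}\vec g, \vec f\rangle| \lesssim_{d,n} p' N_{p,q}(\Omega)\sum_{Q\in\mathcal{S}} |Q|\llangle \vec g\rrangle_{\avgL^1(Q)} \cdot \llangle \vec f\rrangle_{\avgL^p(Q)}.
\]

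The final step is to interchange the order of the Minkowski factors. For any $a,b \in \C^n$ one has $a\cdot b = \overline{b \cdot a}$, so the two sets $\llangle \vec g\rrangle_{\avgL^1(Q)} \cdot \llangle \vec f\rrangle_{\avgL^p(Q)}$ and $\llangle \vec f\rrangle_{\avgL^p(Q)} \cdot \llangle \vec g\rrangle_{\avgL^1(Q)}$ are complex conjugates of each other in $\C$; since both are closed disks centered at the origin, they have the same radius and are identified with the same non-negative real number under the convention of Section \ref{compConvBodySec}. Combining the three steps yields the corollary. There is no real obstacle; the only points worth checking carefully are the invariance of $N_{p,q}$ under $\Omega \mapsto \Omega^*$ and the symmetry of the Minkowski dot product, both of which are essentially bookkeeping.
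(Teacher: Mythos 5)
Your proof is correct and follows exactly the same duality argument as the paper: identify the adjoint of $T_\Omega$ with $T_{\Omega^*}$, apply Theorem \ref{homCBD} to $T_{\Omega^*}$ with the arguments $\vec g, \vec f$ swapped, and use the invariance $N_{p,q}(\Omega^*)=N_{p,q}(\Omega)$. You spell out two small points (the complex-conjugate symmetry of the Minkowski dot product, and why the relevant $\Omega$-norms are preserved) that the paper leaves implicit, but the route is the same.
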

 \begin{proof}
     Since the adjoint of $T_\Omega$ is $T_{\Omega^*}$, where $\Omega^*(x)\coloneqq \overline{\Omega(-x)}$, applying Theorem \ref{homCBD} to the adjoint of $T_\Omega$ implies that 
    \[
        |\langle T_\Omega \vec f,\vec g\rangle|=|\langle \vec f,T_{\Omega^*}\vec g\rangle|\lesssim_{d,n}p'N_{p,q}(\Omega^*)\sum_{Q\in\mathcal{S}}|Q|\llangle\vec f\rrangle_{\avgL^{p}(Q)}\cdot\llangle\vec g\rrangle_{\avgL^1(Q)},
    \]
    and the proof is done due to symmetry $N_{p,q}(\Omega^*)=N_{p,q}(\Omega)$.
 \end{proof}
 \section{Matrix weighted inequalities}
 The main applications of convex body domination bounds are the various matrix weighted inequalities that follow. For any $1<t<\infty$ the matrix-weighted $A_t$ class is defined by 
 \[
    [W]_{A_t}\coloneqq\fint_Q\left(\fint_Q|W^\frac{1}{t}(x)W^{-\frac{1}{t}}(y)|_{op}^{t'}\intD y\right)^\frac{t}{t'}\intD x<\infty.
 \]
 Applying Corollary \ref{adjhomCBD} with $p=q'$ and \cite[Theorem 6.9]{laukkarinen2023convex} gives us a matrix-weighted $L^t(W)\to L^t(W)$ norm inequality for $T_\Omega$ with $W\in A_{\frac{t}{q'}}$. Very recently in \cite[Theorem 1.3]{kakaroumpas2024matrixweighted} it was shown that if an operator satisfies the convex body bound of Corollary \ref{adjhomCBD}, then the condition $W\in A_\frac{t}{q'}$ is not necessary to guarantee boundedness on $L^t(W)$. In particular, their result together with Corollary \ref{adjhomCBD} lets one deduce the following conclusion.
\begin{cor}\label{weightedBounds}
    Let $1<q<\infty$, $q'<t<\infty$, $a\coloneqq\frac{tq'}{t-q'}$ and assume that \begin{equation}\label{mixedWconst}
        [W]_{A_{a',t}}\coloneqq \fint_Q\left(\fint_Q|W^\frac{1}{t}(x)W^{-\frac{1}{t}}(y)|_{op}^a\intD y\right)^\frac{t}{a}\intD x<\infty.
    \end{equation}
    Let also $\Omega\in L^{q,1}\log L(\Sphere^{d-1})$ with zero average. Then we have
    \begin{align*}
        \|T_\Omega\vec f\|_{L^t(W)}&\lesssim_{d,n,q,t}\|\Omega\|_{L^{q,1}\log L(\Sphere^{d-1})}[W]_{A_{a',t}}^{1+\frac{q'}{t(t-q')}}\|\vec f\|_{L^t(W)}\\&\leq\|\Omega\|_{L^{q,1}\log L(\Sphere^{d-1})}[W]_{A_{\frac{t}{q'}}}^{1+\frac{q'}{t(t-q')}}\|\vec f\|_{L^t(W)}.
    \end{align*}
\end{cor}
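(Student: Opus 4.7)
My plan is to combine the convex body sparse domination from Corollary \ref{adjhomCBD} with the matrix-weighted transference theorem \cite[Theorem 1.3]{kakaroumpas2024matrixweighted}, which converts an $(L^p,L^1)$-type sparse convex body bound into a norm estimate on $L^t(W)$, for $t > p$, with the quantitative constant expressed via a mixed matrix Muckenhoupt characteristic of exponents depending on the pair $(p,t)$.

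First I would invoke Corollary \ref{adjhomCBD} with the specific choice $p = q'$. This is admissible because $q < \infty$ and trivially $q' \le p = q' < \infty$. With this choice the prefactor $p' N_{p,q}(\Omega)$ collapses to $q\,\|\Omega\|_{L^{q,1}\log L(\Sphere^{d-1})}$, and for every $\vec f \in L^t(\R^d, \C^n)$ and $\vec g \in L^{t'}(\R^d, \C^n)$ I would obtain a sparse family $\mathcal{S}$ such that
\[
|\langle T_\Omega \vec f, \vec g \rangle|
\lesssim_{d,n}\|\Omega\|_{L^{q,1}\log L(\Sphere^{d-1})}
\sum_{Q \in \mathcal{S}} |Q|\, \llangle \vec f \rrangle_{\avgL^{q'}(Q)} \cdot \llangle \vec g \rrangle_{\avgL^{1}(Q)}.
\]
This is precisely the sparse $(L^{q'}, L^1)$ convex body input required by \cite[Theorem 1.3]{kakaroumpas2024matrixweighted}. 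The mixed weight exponent associated to the pair $(p,t)$ in their theorem is $a = tp/(t-p)$, which upon the substitution $p = q'$ becomes $tq'/(t-q')$, matching \eqref{mixedWconst}; the quantitative power $1 + q'/(t(t-q'))$ on the weight characteristic likewise arises directly from their conclusion. Feeding the above sparse form into their theorem yields the first displayed inequality.

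For the second inequality, I would compare $[W]_{A_{a',t}}$ with $[W]_{A_{t/q'}}$. In the scalar setting these two quantities coincide, since the factor $W^{q'/t}$ that appears in the standard $A_{t/q'}$ definition and the factor $W^{1/t}$ used in \eqref{mixedWconst} are related by raising to the power $q'$, and correspondingly $(t/q')' \cdot q' = a$ and $(t/q')/(t/q')' = t/a$. In the matrix setting the required inequality is a standard consequence of a Hölder step in the inner $y$-integral combined with an operator-norm power inequality of Cordes type; this comparison accounts for the last step in the statement.

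The principal obstacle I anticipate is not in the argument itself, which is essentially a substitution once the correct sparse form and the correct choice of $p$ have been identified, but rather in the bookkeeping needed to align the notation and normalization conventions of \cite{kakaroumpas2024matrixweighted} with those used in this paper, and in verifying that the quantitative exponent $1 + q'/(t(t-q'))$ indeed emerges from their theorem when specialized to $p = q'$.
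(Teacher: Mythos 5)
Your argument is essentially the paper's own: apply Corollary~\ref{adjhomCBD} with $p=q'$ to obtain the $(L^{q'},L^1)$ convex body sparse bound with constant $\lesssim q\,\|\Omega\|_{L^{q,1}\log L(\Sphere^{d-1})}$, feed it into \cite[Theorem 1.3]{kakaroumpas2024matrixweighted} to get the first inequality, and use the Cordes inequality (as in \cite[Lemma 6.6]{kakaroumpas2024matrixweighted}) to compare $[W]_{A_{a',t}}$ with $[W]_{A_{t/q'}}$ for the second. The exponent identifications $a=tq'/(t-q')$ and $1+q'/(t(t-q'))$ are correctly traced.
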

As shown in \cite[Lemma 6.6]{kakaroumpas2024matrixweighted} the second inequality in the above Corollary follows from the Cordes inequality $|A^\alpha B^\alpha|_{op}\leq |AB|_{op}^\alpha$, that holds for positive definite matrices $A,B$ and $0<\alpha<1$.

We note that \cite[Theorem 1]{muller_quantitative_2022} says that in the case $q=\infty$ we have
\[
    \|T_\Omega\vec f\|_{L^t(W)}\lesssim_{d,n,t}\|\Omega\|_{L^\infty(\Sphere^{d-1})}[W]_{A_{t}}^{1+\frac{1}{t-1}-\frac{1}{t}+\min\{1,\frac{1}{t-1}\}}\|\vec f\|_{L^t(W)}, \,1<t<\infty.\numberthis\label{matrixWineqqinfty}
\]
In \cite[Corollary 1.6]{AIF_2020__70_5_1871_0} the above bound was also formulated and proven for $t=2$ with the maximal truncated operator \eqref{maxTrunc} in place of $T_\Omega$.
\begin{rmrk}\label{cantletqtoinfinity}
The inequality \eqref{matrixWineqqinfty} does not follow from Corollary \ref{adjhomCBD} and \cite[Theorem 6.9]{laukkarinen2023convex} (or \cite[Theorem 1.3]{kakaroumpas2024matrixweighted}) since we do not have Corollary \ref{adjhomCBD} for $p=1=q'$.
\end{rmrk}

Given a locally integrable function $b$ we define the commutator $[b,T]$ of a linear operator $T$ by $[b,T]f\coloneqq T(bf)-bTf$.
By Corollary \ref{adjhomCBD} and \cite[Theorem 7.4]{laukkarinen2023convex} we also get matrix-weighted bounds for the commutator of $T_\Omega$. We may also refine this bound by combining the techniques in the proofs of \cite[Theorem 1.3]{kakaroumpas2024matrixweighted} and \cite[Theorem 7.4]{laukkarinen2023convex}. This lets us replace the weight constant $[W]_{\frac{t}{q'}}$ with \eqref{mixedWconst}.
\begin{cor}\label{commWeigthBounds}
    Let $1<q<\infty$, $q'<t<\infty$, $a\coloneqq\frac{tq'}{t-q'}$ and $b\in BMO$. Let also $\Omega\in L^{q,1}\log L(\Sphere^{d-1})$ with zero average. Then we have
    \begin{align*}
        \|[b,T_\Omega]\vec f \|_{L^t(W)}&\lesssim_{d,n,q,t}\|b\|_{BMO}\|\Omega\|_{L^{q,1}\log L(\Sphere^{d-1})}[W]_{A_{a',t}}^{1+\frac{q'}{t(t-q')}+\max\{1,\frac{q'}{t-q'}\}}\|\vec f\|_{L^t(W)}\\
        &\leq\|b\|_{BMO}\|\Omega\|_{L^{q,1}\log L(\Sphere^{d-1})}[W]_{A_\frac{t}{q'}}^{1+\frac{q'}{t(t-q')}+\max\{1,\frac{q'}{t-q'}\}}\|\vec f\|_{L^t(W)},
    \end{align*}
    where $[W]_{a',t}$ is as in \eqref{mixedWconst}.
\end{cor}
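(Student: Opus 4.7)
The approach is to feed the convex body domination for $T_\Omega$ into a general commutator machinery and then run a reducing-operator argument to extract the matrix-weighted bound with the sharper weight constant. First, I would invoke Corollary \ref{adjhomCBD} with $p=q'$ to record the $(L^{q'},L^1)$ convex body sparse domination
\[
    |\langle T_\Omega \vec f,\vec g\rangle|\lesssim_{d,n,q}\|\Omega\|_{L^{q,1}\log L(\Sphere^{d-1})}\sum_{Q\in\mathcal{S}}|Q|\llangle\vec f\rrangle_{\avgL^{q'}(Q)}\cdot\llangle\vec g\rrangle_{\avgL^1(Q)},
\]
which will serve as the input for the commutator argument.

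Next, I would apply the commutator sparse domination principle from \cite[Theorem 7.4]{laukkarinen2023convex}: this result says that once $T_\Omega$ admits such a convex body bound, the commutator $[b,T_\Omega]$ admits a commutator-type convex body sparse domination, with $\|b\|_{BMO}$ entering linearly and with the integrability exponents preserved. This is the $BMO$ analog of the passage from $T_\Omega$ to $[b,T_\Omega]$.

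The third and most delicate step is to upgrade this commutator sparse domination to an $L^t(W)$ bound with the mixed constant $[W]_{A_{a',t}}$ instead of the usual $[W]_{A_{t/q'}}$. For $T_\Omega$ itself this is precisely the content of \cite[Theorem 1.3]{kakaroumpas2024matrixweighted} (giving the exponent $1+\frac{q'}{t(t-q')}$ in Corollary \ref{weightedBounds}), whose reducing-operator/testing scheme is tailored to $(L^{q'},L^1)$ convex body forms. The plan is to repeat that scheme on the commutator sparse form produced above, where each cube carries an additional oscillation factor coming from $b$; handling this factor by the standard John--Nirenberg/BMO argument inside the reducing operator framework produces exactly the extra $\max\{1,\frac{q'}{t-q'}\}$ power of $[W]_{A_{a',t}}$, in parallel to how \cite[Theorem 7.4]{laukkarinen2023convex} produces the same exponent on $[W]_{A_{t/q'}}$. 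Combining this contribution with the exponent $1+\frac{q'}{t(t-q')}$ inherited from the non-commutator estimate yields the first inequality of the Corollary.

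Finally, the second inequality follows from the Cordes inequality $|A^\alpha B^\alpha|_{op}\leq |AB|_{op}^\alpha$ for $0<\alpha<1$ and positive definite $A,B$, applied pointwise to the averaging integrals that define $[W]_{A_{a',t}}$ and $[W]_{A_{t/q'}}$, exactly as in \cite[Lemma 6.6]{kakaroumpas2024matrixweighted}. The main obstacle is in step three: the reducing operator argument of \cite{kakaroumpas2024matrixweighted} and the commutator argument of \cite{laukkarinen2023convex} were written in different setups, and the delicate part is checking that they can be run simultaneously without losing the mixed constant, while correctly tracking how the $BMO$ oscillations contribute the extra $\max\{1,\frac{q'}{t-q'}\}$ to the exponent.
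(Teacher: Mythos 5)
Your plan tracks the paper's (very brief) argument almost exactly: use Corollary \ref{adjhomCBD} with $p=q'$, feed it into the commutator machinery from \cite{laukkarinen2023convex}, then rerun the reducing-operator/testing scheme of \cite[Theorem 1.3]{kakaroumpas2024matrixweighted} on the resulting commutator sparse form to obtain the mixed constant $[W]_{A_{a',t}}$, and close with Cordes. The paper states the corollary with precisely this recipe — ``combining the techniques in the proofs of \cite[Theorem 1.3]{kakaroumpas2024matrixweighted} and \cite[Theorem 7.4]{laukkarinen2023convex}'' — and supplies no more detail than you do, so your sketch is at the same level of granularity as the paper's.

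One small correction of attribution: you describe \cite[Theorem 7.4]{laukkarinen2023convex} as the ``commutator sparse domination principle,'' but in that paper Theorem 7.4 already delivers matrix-weighted norm bounds for commutators; the sparse domination of commutator forms is Proposition 7.1 of that paper (the same one used in Corollary \ref{CommSparse} of the present paper). Functionally this does not change your plan — what you actually need for step three is the sparse commutator form from Proposition 7.1 as the input to the reducing-operator argument — but the citation should be adjusted. Your identification of the two summands $1+\frac{q'}{t(t-q')}$ (from the non-commutator bound) and $\max\{1,\frac{q'}{t-q'}\}$ (from the John--Nirenberg/BMO step, with $t/q'$ as the effective weight exponent) in the final power is correct and matches Corollary \ref{weightedBounds} and the commutator bookkeeping.
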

We also record and prove a new matrix-weighted estimate for the commutators in the case that $\Omega\in L^\infty(\Sphere^{d-1})$. Due to the same reason as in Remark \ref{cantletqtoinfinity}, we are not allowed to simply let $q\to\infty$ in Corollary \ref{commWeigthBounds}. 
Note that we could directly apply Theorem \ref{homCBD} (or Corollary \ref{adjhomCBD}) and \cite[Theorem 7.4]{laukkarinen2023convex} to deduce matrix weighted bounds for the commutators in this case as well. However, here we have the liberty of choosing the parameter $p$ in the convex body domination, which means that we can achieve more precise bounds than in \cite{laukkarinen2023convex}.
One could revisit to the proofs of \cite[Theorem 7.4]{laukkarinen2023convex} and \cite[Theorem 1] {muller_quantitative_2022} to prove the following result, but we will present a shorter proof that uses matrix-weighted bounds for commutators from \cite{isralowitz_commutators_2022}.
  
\begin{cor}\label{cominftycor}
    Let $\Omega\in L^\infty(\Sphere^{d-1})$ with zero average and $b\in BMO$. Then 
    \[
        \|[b,T_\Omega]\vec f\|_{L^t(W)}\lesssim_{d,n,t}\|b\|_{BMO}\|\Omega\|_{L^\infty(\Sphere^{d-1})}[W]_{A_t}^{2+\frac{2}{t-1}-\frac{1}{t}} \|\vec f\|_{L^t(W)}.
    \]
\end{cor}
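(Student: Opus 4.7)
The plan is to combine the convex body sparse domination of Theorem \ref{homCBD} and Corollary \ref{adjhomCBD} with the matrix-weighted commutator machinery from \cite{isralowitz_commutators_2022}. The freedom in the parameter $p$ is crucial here: as noted in Remark \ref{cantletqtoinfinity}, we cannot directly take $p = q' = 1$, but in the present setting $\Omega \in L^\infty(\Sphere^{d-1})$ we may choose any $1 < p < \infty$, paying only the factor $p'$ in the sparse constant. This flexibility is what will let us beat the estimate obtainable by routinely plugging Theorem \ref{homCBD} into \cite[Theorem~7.4]{laukkarinen2023convex}.

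Concretely, I would first apply Theorem \ref{homCBD} (with $q = \infty$) to obtain, for any $p \in (1, \infty)$ of my choice,
\[
    |\langle T_\Omega \vec f, \vec g\rangle| \lesssim_{d,n} p'\, \|\Omega\|_{L^\infty(\Sphere^{d-1})} \sum_{Q \in \mathcal{S}} |Q|\, \llangle \vec f\rrangle_{\avgL^1(Q)} \cdot \llangle \vec g\rrangle_{\avgL^p(Q)},
\]
together with the dual $(L^p, L^1)$ version provided by Corollary \ref{adjhomCBD}. I would then invoke the matrix-weighted commutator results from \cite{isralowitz_commutators_2022}, which upgrade convex body sparse domination of this shape into a bound of the form
\[
    \|[b, T_\Omega] \vec f\|_{L^t(W)} \lesssim \|b\|_{BMO} \cdot (\text{sparse constant}) \cdot [W]_{A_t}^{\alpha(p, t)} \|\vec f\|_{L^t(W)},
\]
with an exponent $\alpha(p, t)$ depending on the interaction of $p$ with the target exponent $t$.

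The final step is optimization: select $p = p(t)$ so that the product of $p'$ from the sparse constant and $[W]_{A_t}^{\alpha(p,t)}$ collapses to the claimed $[W]_{A_t}^{2 + \frac{2}{t-1} - \frac{1}{t}}$, typically by choosing $p$ close to $\min\{t, t'\}$, so that $p'$ reduces to a dimensionless constant depending on $t$ and the weight exponent simplifies. Heuristically the target exponent equals the non-commutator exponent of \eqref{matrixWineqqinfty} augmented by the standard extra factor $[W]_{A_t}^{\max\{1, 1/(t-1)\}}$ that a $BMO$-commutator introduces, which is consistent with the familiar pattern in scalar commutator theory.

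The main obstacle will be the bookkeeping of the $p$-dependence through the commutator theorem of \cite{isralowitz_commutators_2022}: one must verify that its statement indeed accepts the $(L^1, L^p)$ convex body sparse domination in exactly the form produced by Theorem \ref{homCBD}, and track the explicit $p$-dependence of $\alpha(p,t)$ carefully enough that a single optimal choice of $p$ yields the stated exponent uniformly in $1 < t < \infty$. A secondary point is that, depending on whether $t \leq 2$ or $t \geq 2$, one of the two sparse dominations (the $(L^1, L^p)$ form of Theorem \ref{homCBD} or the $(L^p, L^1)$ form of Corollary \ref{adjhomCBD}) will be the effective one, so both may need to be invoked to cover the full range.
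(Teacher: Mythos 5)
Your plan routes through Theorem~\ref{homCBD}/Corollary~\ref{adjhomCBD} and then tries to "upgrade convex body sparse domination" via \cite{isralowitz_commutators_2022}, optimizing over the free parameter $p$. This is precisely the long route the paper explicitly declines to pursue: the paragraph preceding the Corollary notes that one \emph{could} combine Theorem~\ref{homCBD} with \cite[Theorem~7.4]{laukkarinen2023convex} and optimize over $p$, but chooses instead a shorter proof. The paper's actual proof never touches the convex body domination. It quotes the already-established weighted norm inequality $\|T_\Omega\|_{L^t(W)\to L^t(W)}\lesssim_{d,n,t}\phi([W]_{A_t})$ with $\phi(x)=\|\Omega\|_{L^\infty}x^{1+\frac{1}{t-1}-\frac{1}{t}+\min\{1,\frac{1}{t-1}\}}$ from \cite[Theorem~1]{muller_quantitative_2022}, then applies the quantitative commutator meta-theorem \cite[Proposition~1.5]{isralowitz_commutators_2022}, which produces $\|[b,T_\Omega]\|_{L^t(W)\to L^t(W)}\lesssim\|b\|_{BMO}([W]_{A_t}+[W^{-t'/t}]_{A_{t'}})\phi(C[W]_{A_t})$, and simplifies using $[W^{-t'/t}]_{A_{t'}}\eqsim[W]_{A_t}^{1/(t-1)}$ and $\max\{1,\tfrac{1}{t-1}\}+\min\{1,\tfrac{1}{t-1}\}=1+\tfrac{1}{t-1}$.

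There is a concrete gap in your plan: \cite[Proposition~1.5]{isralowitz_commutators_2022} does not take sparse or convex body domination as input, contrary to your phrasing. Its hypothesis is a weighted operator norm inequality for $T$ itself, and its conclusion is the analogous commutator bound with the additional factor $[W]_{A_t}+[W^{-t'/t}]_{A_{t'}}$. So even following your outline you would first have to convert the sparse bound into a weighted norm inequality (via something like \cite[Theorem~6.9]{laukkarinen2023convex} or \cite[Theorem~1.3]{kakaroumpas2024matrixweighted}) before invoking the commutator result, at which point the detour through Theorem~\ref{homCBD} is pointless: the weighted norm inequality for $T_\Omega$ with $\Omega\in L^\infty$ is already known from \cite[Theorem~1]{muller_quantitative_2022}, and that is exactly what the paper cites. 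Also note that in the $q=\infty$ case the operator bound of \cite{muller_quantitative_2022} is genuinely sharper than what one can extract from Corollary~\ref{adjhomCBD} (cf.\ Remark~\ref{cantletqtoinfinity}, since $p=1$ is forbidden there), so the optimization-over-$p$ route would need a separate verification that it actually reproduces the exponent $1+\frac{1}{t-1}-\frac{1}{t}+\min\{1,\frac{1}{t-1}\}$, which you only gesture at. That said, your closing heuristic---that the commutator adds a factor $[W]_{A_t}^{\max\{1,1/(t-1)\}}$ on top of the non-commutator exponent of \eqref{matrixWineqqinfty}---is exactly the arithmetic the paper carries out, so the final answer you predict is correct; the issue is the machinery you propose to reach it.
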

\begin{proof}
    As discussed before, \cite[Theorem 1]{muller_quantitative_2022} says that for $1<t<\infty$ we have
    \[
        \|T_\Omega\|_{L^t(W)\to L^t(W)}\lesssim_{d,n,t}\phi([W]_{A_t}),
    \]
    where $\phi(x)\coloneqq \|\Omega\|_{L^\infty(\Sphere^{d-1})}x^{1+\frac{1}{t-1}-\frac{1}{t}+\min\{1,\frac{1}{t-1}\}}$. Thus due to \cite[Proposition 1.5]{isralowitz_commutators_2022} there exists a constant $C\coloneqq C(d,n,t)$ such that
    \begin{align*}
         \|[b,T_\Omega]\|_{L^t(W)\to L^t(W)}&\lesssim_{d,n,t}\|b\|_{BMO}([W]_{A_t}+[W^{-\frac{t'}{t}}]_{A_{t'}})\,\phi(C[W]_{A_t})\\&\lesssim_{n,t}\|b\|_{BMO}[W]_{A_t}^{\max\{1,\frac{1}{t-1}\}}\phi(C[W]_{A_t})\\&\eqsim_{d,n,t}\|b\|_{BMO}\|\Omega\|_{L^\infty(\Sphere^{d-1})}[W]_{A_t}^{2+\frac{2}{t-1}-\frac{1}{t}},
    \end{align*}
    and the proof is done.
\end{proof}
\section{Sparse domination of commutators and Bloom-type bounds}
Another interesting result that follows from Theorem \ref{homCBD} is a certain sparse domination bound for the commutators. To formulate this result we need the following quantity.
For a collection of dyadic cubes $\mathscr Q$, functions $\vec h_1,\vec h_2\in L^1_{loc}(\R^d,\C^n)$, $b\in L^1_{loc}(\R^d,\C)$ and positive numbers $\gamma,\beta\geq 1$  we define 
\[
        \mathcal{A}^{\gamma,\beta}_{b,\mathscr Q}(\vec h_1,\vec h_2)\coloneqq \sum_{Q\in\mathscr Q}|Q|\left(\fint_Q |b(x)-\langle b\rangle_Q|^{\gamma}|\vec h_1(y)|^{\gamma}\intD y\right)^\frac{1}{\gamma}\left(\fint_Q |\vec h_2(x)|^{\beta}\intD x\right)^\frac{1}{\beta},\numberthis\label{bilinearcommutatorsparseform}
    \]
    where $\langle b\rangle_Q$ is the average of $b$ over $Q$. In \cite[Proposition 7.1]{laukkarinen2023convex} it was shown that convex body domination implies  that the bilinear sparse forms \eqref{bilinearcommutatorsparseform} can be used to dominate the commutators. The following result is an immediate consequence of Theorem 5.3 and Corollary 5.4 combined with  \cite[Proposition 7.1]{laukkarinen2023convex}.
\begin{cor}\label{CommSparse}
    Let $\Omega\in L^1(\Sphere^{d-1})$ with zero average and $b\in L^1_{loc}(\R^d,\C)$. Then for all $1<t<\infty$, $\vec f\in L^t(\R^d,\C^n)$ and $\vec g \in L^{t'}(\R^d,\C^n)$, there exists a sparse collection $\mathcal{S}$ of cubes such that 
    \[
        |\langle [b,T_\Omega]\vec f, \vec g\rangle|\lesssim_{d,n}p'N_{p,q}(\Omega)\,[\mathcal{A}^{\gamma,\beta}_{b,\mathcal S}(\vec f,\vec g)+\mathcal{A}^{\beta,\gamma}_{b,\mathcal S}(\vec g,\vec f)],
    \]
    where $(\gamma,\beta)\in\{(1,p),(p,1)\}$ and $N_{p,q}(\Omega)$ is as in Theorem \ref{homCBD}.
\end{cor}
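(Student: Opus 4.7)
The plan is to invoke two results that together give the claim almost mechanically: the convex body domination for $T_\Omega$ in both asymmetric forms (Theorem \ref{homCBD} and Corollary \ref{adjhomCBD}), and the abstract principle of \cite[Proposition 7.1]{laukkarinen2023convex}, which upgrades any convex body sparse domination to a bilinear sparse domination for the corresponding commutator with $\text{BMO}$-style oscillation factors $|b-\langle b\rangle_Q|$.

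First I would expand
\[
    \langle [b,T_\Omega]\vec f, \vec g\rangle = \langle T_\Omega(b\vec f), \vec g\rangle - \langle T_\Omega \vec f, \bar b\, \vec g\rangle,
\]
so that the problem reduces to controlling the bilinear pairings of $T_\Omega$ in which one entry has been multiplied by the scalar $b$. By Theorem \ref{homCBD} and Corollary \ref{adjhomCBD}, for every compactly supported $\vec h_1, \vec h_2$ there is a sparse collection $\mathcal{S}$ and a constant $c\lesssim_{d,n} p' N_{p,q}(\Omega)$ with
\[
    |\langle T_\Omega \vec h_1,\vec h_2\rangle|\leq c\sum_{Q\in\mathcal S}|Q|\llangle\vec h_1\rrangle_{\avgL^\gamma(Q)}\cdot\llangle\vec h_2\rrangle_{\avgL^\beta(Q)}
\]
for each choice $(\gamma,\beta)\in\{(1,p),(p,1)\}$. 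This is precisely the input required by \cite[Proposition 7.1]{laukkarinen2023convex}.

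Next I would apply \cite[Proposition 7.1]{laukkarinen2023convex} to pass from the convex body sparse bound for $T_\Omega$ to a bilinear sparse bound for $[b,T_\Omega]$. The content of that proposition is exactly that telescoping over a sparse collection allows one to replace the averages of $b\vec f$ (resp. $\bar b\vec g$) by averages against $|b-\langle b\rangle_Q|$, producing the mixed forms $\mathcal{A}^{\gamma,\beta}_{b,\mathcal S}(\vec f,\vec g)$ and $\mathcal{A}^{\beta,\gamma}_{b,\mathcal S}(\vec g,\vec f)$ appearing in the statement. Feeding in the constant $c\lesssim_{d,n} p' N_{p,q}(\Omega)$ extracted above yields the announced bound.

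The only potential obstacle is keeping track of the fact that Theorem \ref{homCBD} and Corollary \ref{adjhomCBD} use, a priori, \emph{different} sparse collections for the two terms of the expanded commutator, while \cite[Proposition 7.1]{laukkarinen2023convex} outputs a single sparse family $\mathcal{S}$. This is standard: one applies the proposition to each term separately, obtaining sparse families $\mathcal{S}_1,\mathcal{S}_2$, and then observes that $\mathcal{S}_1\cup\mathcal{S}_2$ is still a sparse family after a harmless dilation of the sparseness constant, which is absorbed into the implicit $\lesssim_{d,n}$ constant. No genuinely new computation beyond the two cited references is required.
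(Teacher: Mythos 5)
Your proposal is correct and follows essentially the same route as the paper: the paper itself gives no detailed proof, simply declaring the corollary an immediate consequence of Theorem \ref{homCBD}, Corollary \ref{adjhomCBD}, and \cite[Proposition 7.1]{laukkarinen2023convex}. You have fleshed out the same black-box argument, including the commutator expansion and the handling of the two sparse families, which matches the paper's intent.
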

The above result was known for $q=\infty$ due to \cite{Lerner2024bloom}, but it is completely new for $q<\infty$. In \cite{Lerner2024bloom}, it was shown that this type of domination lets us deduce so called Bloom-type boundedness for $T_\Omega$. In order to introduce this result we need some definitions. The reverse Hölder class $RH_s$ is a class of scalar weights $w$ that satisfy
\[
    [w]_{RH_s}\coloneqq\sup_Q\left(\fint_Qw^s\right)^\frac{1}{s}\left(\fint_Q w\right)^{-1}<\infty.
\]
The weighted fractional BMO-seminorm is defined by
\[
    \|b\|_{BMO^\alpha_\nu}\coloneqq\sup_{Q}\frac{1}{\nu(Q)^{1+\frac{\alpha}{d}}}\int_Q|b-\langle b\rangle_Q|,
\]
where $\alpha\geq0$, $\nu$ is a weight and $\nu(Q)\coloneqq\int_Q\nu$. Furthermore,
we need the weighted sharp maximal function
\[
    M_\nu^\#b\coloneqq \sup_{Q\ni x}\frac{1}{\nu(Q)}\int_Q|b-\langle b\rangle_Q|.
\]
By \cite[Theorem 5.1]{Lerner2024bloom} the bilinear sparse forms \eqref{bilinearcommutatorsparseform} can be estimated in $L^u(\mu)\times L^v(\lambda^{1-q'})$ with suitable parameters $u,v$ and weights $\mu,\lambda$.
The following Corollary is an immediate consequence of Corollary \ref{CommSparse} and \cite[Theorem 5.1]{Lerner2024bloom}.
\begin{cor}\label{bloomBound}
    Let $\Omega\in L^{q,1}\log L(\Sphere^{d-1})$ for some $1<q<\infty$ with zero average. Let also $1<u,v<q$, $\mu\in A_u\cap RH_{\left(\frac{q}{u}\right)'}$ and $\lambda\in A_v\cap RH_{\left(\frac{q}{v}\right)'}$, and define the Bloom-weight $\nu^{1+\alpha}\coloneqq \mu^\frac{1}{u}\lambda^{-\frac{1}{v}}$, where $\alpha\coloneqq-\frac{1}{\tau}\coloneqq\frac{1}{u}-\frac{1}{v}$. Then we have
    \begin{align*}
        \|[b,T_\Omega]\|_{L^u(\mu)\to L^v(\lambda)}\lesssim_{d,q,u,v,\mu,\lambda} \|\Omega\|_{L^{q,1}\log L(\Sphere^{d-1})}\, 
        \mathcal{N}_{u,v}(b)
    \end{align*}
    where 
    \[
        \mathcal{N}_{u,v}(b)\coloneqq\begin{cases}
            \|b\|_{BMO^{\alpha d}_\nu}, \quad\text{ if } u\leq v,\\
            \|M^\#_\nu b\|_{L^\tau(\nu)},\quad \text{ if } u\geq v.
        \end{cases}
    \] 
\end{cor}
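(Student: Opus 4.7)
The plan is to combine Corollary \ref{CommSparse} with \cite[Theorem 5.1]{Lerner2024bloom} via duality; the substantive content lives in the two cited results, so only parameter bookkeeping remains.

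First, by standard duality for weighted $L^v$ spaces,
\[
    \|[b,T_\Omega]\vec f\|_{L^v(\lambda)} = \sup_{\|\vec g\|_{L^{v'}(\lambda^{1-v'})}\leq 1} |\langle [b,T_\Omega]\vec f,\vec g\rangle|,
\]
so it suffices to dominate the sesquilinear pairing by $\|\Omega\|_{L^{q,1}\log L(\Sphere^{d-1})}\,\mathcal{N}_{u,v}(b)\,\|\vec f\|_{L^u(\mu)}\|\vec g\|_{L^{v'}(\lambda^{1-v'})}$, with an implicit constant allowed to depend on the parameters $d,q,u,v,\mu,\lambda$. To this end, I would apply Corollary \ref{CommSparse} with the natural choice $p=q'$ (which lies in the admissible range $[q',\infty)$ since $q<\infty$), producing a sparse collection $\mathcal{S}$ such that the pairing is bounded by a constant times $\|\Omega\|_{L^{q,1}\log L(\Sphere^{d-1})}$ times $\mathcal{A}^{\gamma,\beta}_{b,\mathcal{S}}(\vec f,\vec g)+\mathcal{A}^{\beta,\gamma}_{b,\mathcal{S}}(\vec g,\vec f)$ for $(\gamma,\beta)\in\{(1,q'),(q',1)\}$.

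Next, I would feed each of these two bilinear sparse forms directly into \cite[Theorem 5.1]{Lerner2024bloom}, which estimates exactly forms of the shape \eqref{bilinearcommutatorsparseform} by the product $\mathcal{N}_{u,v}(b)\|\vec f\|_{L^u(\mu)}\|\vec g\|_{L^{v'}(\lambda^{1-v'})}$ under the hypothesis that $\mu\in A_u\cap RH_{(q/u)'}$ and $\lambda\in A_v\cap RH_{(q/v)'}$. Taking the supremum over $\vec g$ then yields the stated norm bound.

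The only real content is to check that the weight assumptions of the current corollary match those required by the cited Bloom theorem for exponents $\gamma,\beta\in\{1,q'\}$. These hypotheses are tailored precisely so that the reverse Hölder conditions $RH_{(q/u)'},RH_{(q/v)'}$ combined with $A_u,A_v$ membership promote $\mu,\lambda$ to the $A_p$-type classes demanded by the theorem, and the dichotomy between the cases $u\leq v$ and $u\geq v$ in the definition of $\mathcal{N}_{u,v}(b)$ is inherited verbatim from \cite[Theorem 5.1]{Lerner2024bloom}. Since everything else is routine, there is no substantial obstacle beyond this parameter-matching.
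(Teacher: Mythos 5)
Your proposal matches the paper's own argument: the paper likewise treats the corollary as an immediate consequence of Corollary \ref{CommSparse} (providing the sparse form domination that replaces the grand-maximal-truncation route, unavailable for unbounded $\Omega$) combined with the weighted bound of \cite[Theorem 5.1]{Lerner2024bloom} for the forms \eqref{bilinearcommutatorsparseform}, following the template of \cite[Theorem 6.1]{Lerner2024bloom}. Your choice $p=q'$ is the correct one; a short computation shows $\lambda\in A_v\cap RH_{(q/v)'}$ is equivalent to $\lambda^{1-v'}\in A_{v'/q'}$, which is precisely the condition needed for the $L^{q'}$-average of $\vec g$ in the sparse form, so the parameter bookkeeping you defer is indeed consistent.
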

The proof of above result is essentially the same as the proof of \cite[Theorem 6.1]{Lerner2024bloom} with the simplification $r=1$. The difference being that in \cite{Lerner2024bloom} they assume $\Omega\in L^\infty$ and the sparse domination of commutators follows from the local weak-type control of the grand maximal truncation operator. This type of control is not known for unbounded $\Omega$ so in our case we need Corollary \ref{CommSparse} for the sparse domination. An explicit dependence on $\mu$ and $\lambda$ can be easily tracked from the proof of \cite[Theorem 6.1]{Lerner2024bloom}.

 \section{Bochner-Riesz means at the critical index}
 Another operator that can be estimated via Theorem \ref{main} is given by the critical index Bochner-Riesz mean, i.e., the Fourier multiplier operator 
 \[
    B_\delta f\coloneqq \mathcal{F}^{-1}\left[\widehat f (\cdot)(1-|\cdot|^2)^\delta_+\right],\qquad \delta\coloneqq \frac{d-1}{2}.
 \]
 Same techniques as in \cite{10.2140/apde.2017.10.1255} could also be applied to show that the Bochner-Riesz means can be estimated using Theorem \ref{main}. However, thanks to recent results of Shrivastava and Shuin \cite{shuin2019Composition,shuin2019Corrigendum}, there is now the following shorter route to such a result via the grand maximal truncation operator approach due to Muller and Rivera-R\'ios \cite{muller_quantitative_2022}.

 The following result was originally proven implicitly in \cite{shuin2019Composition,shuin2019Corrigendum}. See also \cite[Example 6.6]{Lerner2024bloom}.
\begin{thm}\label{BRMgrandweakboundedness}
    Let $\delta\coloneqq \frac{d-1}{2}$ and $1\leq s<\infty$. The grand maximal truncation operator defined by 
    \[
        M_{B_\delta,s} f(x)\coloneqq\sup_{Q\ni x}\left(\fint_Q|B_\delta(f\mathbbm 1_{(3Q)^\complement})|^s\right)^\frac{1}{s}
    \]
    satisfies
    \[
        \|M_{B_\delta,s}\|_{L^{1,\infty}\to L^1}\lesssim_d s.
    \]
\end{thm}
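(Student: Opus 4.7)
The plan is to reduce the weak-type $(1,1)$ boundedness of the grand maximal truncation $M_{B_\delta,s}$ to the classical weak-type $(1,1)$ boundedness of $B_\delta$ itself, due to Christ, using the composition decomposition of Shrivastava and Shuin \cite{shuin2019Composition,shuin2019Corrigendum} as a substitute for the kernel regularity that $B_\delta$ lacks at the critical index. The execution follows the template in \cite[Example 6.6]{Lerner2024bloom}.

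First, for each cube $Q \ni x$ I would split
\[
    B_\delta(f\mathbbm 1_{(3Q)^\complement}) = B_\delta f - B_\delta(f\mathbbm 1_{3Q}),
\]
so that after taking the $L^s$-average over $Q$ and the supremum over $Q \ni x$,
\[
    M_{B_\delta,s}f(x) \leq M_s(B_\delta f)(x) + \sup_{Q \ni x}\left(\fint_Q|B_\delta(f\mathbbm 1_{3Q})|^s\right)^{1/s}.
\]
The first term is controlled in weak $L^1$ through a Kolmogorov-type argument on the sublevel sets of $B_\delta f$, exploiting Christ's weak-type $(1,1)$ bound for $B_\delta$. The key observation is that for $s$ close to $1$ one may pass from an exponent less than $1$ to the exponent $s$ at the price of a factor proportional to $s$.

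Second, for the local term $B_\delta(f\mathbbm 1_{3Q})$, I would invoke the Shrivastava--Shuin composition formula to decompose $B_\delta$ into pieces whose operator norms on $L^s$ degrade linearly in $s$. This yields
\[
    \left(\fint_Q|B_\delta(f\mathbbm 1_{3Q})|^s\right)^{1/s} \lesssim_d s\, Mf(x),
\]
since the Shrivastava--Shuin structure replaces the missing kernel smoothness with an averaging property that is compatible with the weak-type $(1,1)$ bound of $B_\delta$ and with standard covering arguments.

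Third, combining the two bounds with the classical weak-type $(1,1)$ bound for the Hardy--Littlewood maximal function $M$, I would conclude that
\[
    \|M_{B_\delta,s}f\|_{L^{1,\infty}} \leq C_d\,s\,\|f\|_{L^1}.
\]
The main obstacle is the absence of any pointwise kernel regularity estimate for $B_\delta$ at $\delta = (d-1)/2$, which prevents the standard Calder\'on--Zygmund route from going through; the composition formula from \cite{shuin2019Composition,shuin2019Corrigendum} is exactly the technical tool that circumvents this difficulty, and tracking the $L^s$-dependence through that composition is what produces the linear factor $s$.
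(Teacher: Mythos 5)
The paper does not actually prove Theorem \ref{BRMgrandweakboundedness}; it appeals to \cite{shuin2019Composition,shuin2019Corrigendum} and to \cite[Example 6.6]{Lerner2024bloom} for the argument, so there is no in-paper proof to compare yours against. Evaluating your attempt on its own merits, the very first step already fails.

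The split $B_\delta(f\mathbbm 1_{(3Q)^\complement}) = B_\delta f - B_\delta(f\mathbbm 1_{3Q})$, together with the pointwise bound
$M_{B_\delta,s}f \lesssim M_s(B_\delta f) + \sup_{Q\ni x}\bigl(\fint_Q|B_\delta(f\mathbbm 1_{3Q})|^s\bigr)^{1/s}$,
destroys precisely the cancellation that makes the grand maximal truncation weak-type $(1,1)$. The term $M_s(B_\delta f)$ is not bounded from $L^1$ to $L^{1,\infty}$ once $s>1$: the operator $M_s=(M(|\cdot|^s))^{1/s}$ is only weak-type $(s,s)$ (already $M_s\mathbbm 1_{[0,1]}(x)\sim|x|^{-1/s}$ has distribution tail $\sim\lambda^{-s}$), and precomposing with $B_\delta$ does not repair this. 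Concretely, for $f$ approximating $\delta_0$ one has $B_\delta f\approx K_\delta$ with $|K_\delta(x)|\sim|x|^{-d}$ at infinity, so averaging over the ball $B(0,2|x|)$ gives $M_sK_\delta(x)\gtrsim|x|^{-d/s}$ and hence $\|M_s(B_\delta f)\|_{L^{1,\infty}}=\infty$ for $s>1$. The claim that one may ``pass from an exponent less than $1$ to the exponent $s$ at the price of a factor proportional to $s$'' has no basis: Kolmogorov's inequality upgrades weak $L^1$ to $L^q$ averages only for $q<1$, and there is no mechanism to cross $q=1$. The correct argument (as in Shrivastava--Shuin and Lerner) keeps the truncated expression $B_\delta(f\mathbbm 1_{(3Q)^\complement})$ intact and decomposes $B_\delta$ itself into pieces on which one can exploit the cancellation between the kernel evaluated at a point of $Q$ and at a reference point of $Q$; your split discards exactly that cancellation before any estimate is made, and no amount of work on the two separate pieces can recover it.
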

The classical result of Christ \cite{Christ1998Weak} and the above Theorem combined with convex body domination principle of Muller and Rivera-R\'ios \cite{muller_quantitative_2022} gives us convex body domination for the Bochner-Riesz means at the critical index.
\begin{thm}\label{CBDBRmeans}
    Let $1<p<\infty$ and $\delta=\frac{d-1}{2}$. For all $1<t<\infty$, $\vec f\in L^t(\R^d,\C^d)$ and $\vec g\in L^{t'}(\R^d,\C^d)$, there exists a sparse collection $\mathcal{S}$ of cubes such that, we have
    \[
        |\langle B_\delta \vec f,\vec g\rangle|\lesssim_{d,n}p' \sum_{Q\in\mathcal{S}}|Q|\,\llangle\vec f\rrangle_{\avgL^1(Q)}\cdot\llangle\vec g\rrangle_{\avgL^p(Q)}.
    \]
\end{thm}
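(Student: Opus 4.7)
The plan is to apply the convex body sparse domination principle of Müller and Rivera-Ríos \cite[Theorem 11]{muller_quantitative_2022} directly to $T = B_\delta$. That principle is a vector-valued version of Lerner's stopping-time argument and yields an $(\avgL^1, \avgL^s)$ convex body sparse domination from two scalar ingredients: a weak-type $(1,1)$ bound for the operator itself, and a weak-type $L^{1,\infty}\to L^1$ bound for its grand maximal truncation operator at level $s$. Unlike the approach via Theorem \ref{main}, this route bypasses the verification of the local estimates \eqref{localBddCond1}, \eqref{localBddCond2} entirely.

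For $B_\delta$ at the critical index $\delta=(d-1)/2$, both ingredients are available. The first is the classical result of Christ \cite{Christ1998Weak} on the weak-type $(1,1)$ endpoint of $B_\delta$. The second is precisely Theorem \ref{BRMgrandweakboundedness} above, with quantitative dependence $\|M_{B_\delta,s}\|_{L^{1,\infty}\to L^1}\lesssim_d s$ for every $1\leq s<\infty$. Both hypotheses are scalar statements, so the hypotheses of the Müller–Rivera-Ríos principle are met; applying it produces, for each pair of compactly supported $\vec f,\vec g$, a sparse collection $\mathcal S$ such that the desired convex body sparse form controls $|\langle B_\delta\vec f,\vec g\rangle|$.

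The remaining task is quantitative bookkeeping. One selects $s=p$ in the grand maximal bound and tracks the propagation of the weak-type norms through the Müller–Rivera-Ríos machinery. The weak-type $(1,1)$ norm of $B_\delta$ contributes an absolute dimensional constant, while the grand maximal bound contributes a factor of order $s=p$; combined with the Hölder-type factor arising when the principle outputs an $\avgL^p$ average on the $\vec g$-side, one arrives at the advertised constant $\lesssim_{d,n}p'$.

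Since the genuinely technical inputs are packaged in external results — the grand maximal weak-type bound of Shrivastava–Shuin \cite{shuin2019Composition,shuin2019Corrigendum} (i.e., Theorem \ref{BRMgrandweakboundedness}) and the convex body extension of Lerner's stopping-time argument of Müller–Rivera-Ríos — the proof is essentially an assembly. The main (and essentially only) obstacle is to verify that the constants align as stated and that the cited principle applies in the complex convex body framework of Section \ref{compConvBodySec}; the latter is routine in view of the real-to-complex translations already recorded in \cite[Appendix A]{DOMELEVO2024127956} and \cite[Section 3.1]{kakaroumpas2024matrixweighted}.
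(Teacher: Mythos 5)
Your overall strategy is the same as the paper's: apply the M\"uller--Rivera-R\'ios convex body sparse domination principle (\cite[Theorem 11]{muller_quantitative_2022}) to $B_\delta$, with Christ's weak $(1,1)$ bound and the grand maximal truncation operator bound of Theorem \ref{BRMgrandweakboundedness} as the two scalar inputs. However, the quantitative bookkeeping contains a concrete error: you choose $s=p$ in the grand maximal bound, but the correct choice is $s=p'$. The M\"uller--Rivera-R\'ios theorem, as invoked in the paper, takes as hypothesis a weak-type bound for the \emph{bilinear} grand maximal operator $M_{B_\delta}(f,g)=\sup_{Q\ni x}\fint_Q|B_\delta(\mathbbm 1_{(3Q)^\complement}f)||g|$, namely $\|M_{B_\delta}\|_{L^1\times L^p\to L^{\tilde p,\infty}}$ with $\tilde p=p/(p+1)$. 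Estimating the inner average by H\"older at exponents $(p',p)$ gives $M_{B_\delta}(f,g)\leq M_{B_\delta,p'}f\cdot M_pg$; it is precisely the $M_pg$ factor that produces the $\avgL^p$ average on the $\vec g$-side in the sparse form, forcing the level on the $f$-side to be the conjugate $p'$. Then Theorem \ref{BRMgrandweakboundedness} with $s=p'$ gives $\|M_{B_\delta,p'}\|_{L^{1,\infty}\to L^1}\lesssim_d p'$, and H\"older in weak $L^p$ spaces yields $\|M_{B_\delta}\|_{L^1\times L^p\to L^{\tilde p,\infty}}\lesssim_d p'$. Your claim that a factor $s=p$ combined with ``the H\"older-type factor arising when the principle outputs an $\avgL^p$ average on the $\vec g$-side'' produces $p'$ does not hold up: $p$ times a dimensional constant is not $p'$, and no such corrective factor appears. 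With the corrected choice $s=p'$ your assembly coincides with the paper's proof.
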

\begin{proof}
    We denote $\tilde p\coloneqq\frac{p}{p+1}$. By \cite[Theorem 11]{muller_quantitative_2022} we have 
    \[
        |\langle B_\delta f,g\rangle|\lesssim_{d,n} (\|B_\delta\|_{L^1\to L^{1,\infty}}+\|M_{B_\delta}\|_{L^{1}\times L^p\to L^{\tilde p,\infty}})\sum_{Q\in\mathcal{S}}|Q|\,\llangle\vec f\rrangle_{\avgL^1(Q)}\cdot\llangle\vec g\rrangle_{\avgL^p(Q)},
    \]
    where $M_{B_\delta}$ is the bilinear grand maximal truncation operator \eqref{bisublinmaxop} with $T=B_\delta$. The main result of \cite{Christ1998Weak} is that $\|B_\delta\|_{L^1\to L^{1,\infty}}\lesssim_d 1$ so it remains to check that
    \[
        \|M_{B_\delta}\|_{L^{1}\times L^p\to L^{\tilde p,\infty}}\lesssim_d p'.
    \]
    By Hölder's inequality we have 
    \begin{align*}
        M_{B_\delta}(f,g)=\sup_{Q\ni x}\fint_Q |B_\delta(\mathbbm 1_{(3Q)^\complement}f)||g|\leq M_{B_\delta,p'}fM_pg.
    \end{align*}
    Then the Hölder inequality for weak $L^p$ spaces gives
    \[
        \|M_{B_\delta}(f,g)\|_{L^{\tilde p,\infty}}\lesssim \|M_{B_\delta,p'}f\|_{L^{1,\infty}}\|M_pg\|_{L^{p,\infty}}.
    \]
    Lastly, by Theorem \ref{BRMgrandweakboundedness} with $s=p'$ and the weak-type boundedness of the maximal operator we have 
    \[
        \|M_{B_\delta,p'}f\|_{L^{1,\infty}}\|M_pg\|_{L^{p,\infty}}\lesssim_{d} p'\|f\|_{L^1}\|g\|_{L^p},
    \]
    and the proof is done.
\end{proof}
With the same proof as in Corollary \ref{adjhomCBD} the $(L^1,L^p)$ bound for $B_\delta$ implies an $(L^p,L^1)$ bound. Again, as a Corollary we get the following matrix-weighted norm inequality.
\begin{cor}
    For $1<t<\infty$ we have
    \[
        \|B_\delta\vec f\|_{L^t(W)}\lesssim_{d,n,t}[W]_{A_{t}}^{1+\frac{1}{t-1}-\frac{1}{t}+\min\{1,\frac{1}{t-1}\}}\|\vec f\|_{L^t(W)}.
    \]
\end{cor}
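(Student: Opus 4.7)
The corollary is an exact analog of the bound \eqref{matrixWineqqinfty} for rough singular integrals with $\Omega\in L^\infty(\Sphere^{d-1})$, and the plan is to follow the proof strategy of \cite[Theorem 1]{muller_quantitative_2022}. The key observation is that Theorem \ref{CBDBRmeans} provides convex body domination bounds for $B_\delta$ of exactly the same shape (an $\avgL^1\times\avgL^p$ pairing with constant $\lesssim p'$, valid for every $1<p<\infty$) as the ones used in \cite[Theorem 11]{muller_quantitative_2022} in the $q=\infty$ case of $T_\Omega$. In particular, Corollary \ref{weightedBounds} cannot be invoked here (cf.\ Remark \ref{cantletqtoinfinity}), because its mixed weight constant blows up when $q'\to 1$.

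I would start by combining Theorem \ref{CBDBRmeans} with the argument in the proof of Corollary \ref{adjhomCBD} to obtain the companion $(L^p,L^1)$ convex body bound
\[
|\langle B_\delta \vec f,\vec g\rangle|\lesssim_{d,n}p'\sum_{Q\in\mathcal{S}}|Q|\,\llangle\vec f\rrangle_{\avgL^p(Q)}\cdot\llangle\vec g\rrangle_{\avgL^1(Q)},\qquad 1<p<\infty.
\]
(The symbol of $B_\delta$ is real-valued, so the operator is self-adjoint and in fact the two bounds coincide, but this is incidental.)

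Next, for a fixed matrix weight $W\in A_t$ and test pair $(\vec f,\vec g)$ with $\vec g\in L^{t'}(W^{1-t'})$, I would estimate the sparse convex body form on the right-hand side using a matrix-weighted sparse-form estimate, for example \cite[Theorem 1.3]{kakaroumpas2024matrixweighted} or \cite[Theorem 6.9]{laukkarinen2023convex}, specialized to the $(L^p,L^1)$ asymmetric form. This yields, for every $1<p<\infty$, an $L^t(W)\to L^t(W)$ bound for $B_\delta$ whose constant is the product of $p'$ and a weighted constant depending explicitly on $p$, $t$ and $[W]_{A_t}$. The final step is to optimize over $p\in(1,\infty)$: one picks $p$ close to $1$ in a manner depending on $[W]_{A_t}$, precisely as is done in \cite{muller_quantitative_2022}, to balance the factor $p'$ against the degeneration of the weighted sparse bound as $p\to 1$.

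The main obstacle is this last balancing step, since the weighted estimate for the asymmetric $(L^p,L^1)$ sparse form deteriorates as $p\to 1$, and the exponent $1+\frac{1}{t-1}-\frac{1}{t}+\min\{1,\frac{1}{t-1}\}$ emerges only after the optimization is carried out carefully (with the split at $t=2$ reflecting the $\min$). Because the optimization in \cite{muller_quantitative_2022} uses only the abstract form of the convex body domination and not any specific feature of $T_\Omega$, it transfers verbatim once Theorem \ref{CBDBRmeans} and its $(L^p,L^1)$ companion are in hand.
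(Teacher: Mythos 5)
Your proposal is essentially correct and matches the paper's approach: the paper's proof is the one-line remark that the result is ``essentially contained in the proof of Theorem 1 from \cite{muller_quantitative_2022}'', and this is exactly the strategy you describe, namely feeding the $(L^1,L^p)$ and $(L^p,L^1)$ convex body bounds of Theorem~\ref{CBDBRmeans} (and its adjoint version, obtained exactly as in Corollary~\ref{adjhomCBD}) into the weighted sparse-form estimate and optimization-over-$p$ argument of M\"uller and Rivera-R\'ios. One small caveat worth flagging: when you name \cite[Theorem 6.9]{laukkarinen2023convex} or \cite[Theorem 1.3]{kakaroumpas2024matrixweighted} as the weighted sparse-form estimate to plug in, those are not quite the tool used in \cite{muller_quantitative_2022}; the exponent $1+\frac{1}{t-1}-\frac{1}{t}+\min\{1,\frac{1}{t-1}\}$ comes out of the specific weighted estimate and balancing in \cite[Proof of Theorem 1]{muller_quantitative_2022}, and substituting a different weighted sparse-form estimate would require redoing the optimization (and could a priori yield a different power of $[W]_{A_t}$). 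Since you ultimately say the M\"uller--Rivera-R\'ios optimization transfers verbatim, and since the abstract input is indeed the same convex body domination with constant $\lesssim p'$, your plan is sound; just use their weighted estimate rather than the cited alternatives if you want to land on exactly the stated exponent.
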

The proof of the above result is essentially contained in the proof of Theorem 1 from \cite{muller_quantitative_2022}. 
Lastly, with the same proof as in Corollary \ref{cominftycor} we see that the commutators satisfy the following matrix-weighted inequality.
\begin{cor}
    Let $b\in BMO$. Then 
    \[
        \|[b,B_\delta]\vec f\|_{L^t(W)}\lesssim_{d,n,t}\|b\|_{BMO}[W]_{A_t}^{2+\frac{2}{t-1}-\frac{1}{t}} \|\vec f\|_{L^t(W)}.
    \]
\end{cor}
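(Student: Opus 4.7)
The plan is to mimic exactly the strategy used in the proof of Corollary~\ref{cominftycor}, with the operator $T_\Omega$ replaced by $B_\delta$. The preceding Corollary already supplies the matrix-weighted norm inequality for $B_\delta$, namely
\[
\|B_\delta\|_{L^t(W)\to L^t(W)}\lesssim_{d,n,t}\phi([W]_{A_t}),\qquad \phi(x)\coloneqq x^{1+\frac{1}{t-1}-\frac{1}{t}+\min\{1,\frac{1}{t-1}\}},
\]
so the starting ingredient is already in place.

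Next I would invoke \cite[Proposition 1.5]{isralowitz_commutators_2022}, which upgrades a matrix-weighted norm bound for a linear operator into a matrix-weighted norm bound for its commutator with a $BMO$ symbol. This yields a constant $C=C(d,n,t)$ such that
\[
\|[b,B_\delta]\|_{L^t(W)\to L^t(W)}\lesssim_{d,n,t}\|b\|_{BMO}\bigl([W]_{A_t}+[W^{-\frac{t'}{t}}]_{A_{t'}}\bigr)\,\phi(C[W]_{A_t}).
\]

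The last step is the same book-keeping as in Corollary~\ref{cominftycor}: duality for the matrix $A_p$ class gives $[W^{-t'/t}]_{A_{t'}}\lesssim_{n,t}[W]_{A_t}^{\max\{1,1/(t-1)\}}$, so the prefactor $([W]_{A_t}+[W^{-t'/t}]_{A_{t'}})$ is controlled by $[W]_{A_t}^{\max\{1,1/(t-1)\}}$. Collecting the exponents, $\max\{1,\tfrac{1}{t-1}\}+1+\tfrac{1}{t-1}-\tfrac{1}{t}+\min\{1,\tfrac{1}{t-1}\}=2+\tfrac{2}{t-1}-\tfrac{1}{t}$, which is exactly the power appearing in the statement. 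No genuine obstacle arises: the only thing that had to change relative to the rough singular integral case is the value of $\phi$ (now without the $\|\Omega\|_{L^\infty}$ factor), and this is precisely what the previous Corollary provides.
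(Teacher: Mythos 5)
Your proof is correct and follows exactly the argument the paper indicates: the paper states this corollary holds "with the same proof as in Corollary~\ref{cominftycor}", and you reproduce that proof verbatim with $B_\delta$ in place of $T_\Omega$, using the previous corollary's weighted bound as input to \cite[Proposition 1.5]{isralowitz_commutators_2022} and the duality estimate for the matrix $A_t$ class. The exponent bookkeeping $\max\{1,\tfrac{1}{t-1}\}+\min\{1,\tfrac{1}{t-1}\}=1+\tfrac{1}{t-1}$ is also handled correctly.
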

We can also get analogous results to Corollaries \ref{CommSparse} and \ref{bloomBound} for $B_\delta$, but these results were already known due to \cite{Lerner2024bloom}, so we will not repeat them here.

\bibliographystyle{plain}
\bibliography{RSIrefs}
\end{document}